\documentclass[a4paper,reqno,11pt]{amsart}

\usepackage[utf8]{inputenc,xcolor}  
\usepackage{marginnote}
\usepackage{geometry}
\usepackage{float} 
\usepackage[in]{fullpage}

\newcommand{\ds}{\displaystyle}

\newcommand{\tensor}{\otimes}

\newcommand{\op}{\mathcal}

\newcommand{\cdc}{,\dots,}

\usepackage{amsmath}%
\usepackage{amsthm}
\usepackage{amsfonts}%
\usepackage{amssymb}%
\usepackage{graphicx}
\usepackage{xy,amsthm,enumerate,xypic,array}  

\input{xy}
\xyoption{all}

\numberwithin{equation}{section}

\newtheorem{theorem}{Theorem}[section]
\theoremstyle{plain}

\newtheorem{corollary}[theorem]{Corollary}
\newtheorem{lemma}[theorem]{Lemma}
\newtheorem{proposition}[theorem]{Proposition}

\theoremstyle{definition}
\newtheorem{definition}[theorem]{Definition}
\newtheorem{example}[theorem]{Example}

\allowdisplaybreaks[2]


\setcounter{tocdepth}{1}
\addtocounter{MaxMatrixCols}{2}


\begin{document}

\author{Ralph M. Kaufmann and Benjamin C. Ward}


\title{Schwarz Modular Operads Revisited: $\op{SM}=\op{S}\circ\op{M}$.}
\maketitle
\begin{abstract}We prove that the Feynman category encoding Schwarz's variant of modular operads is Koszul.  Our proof uses a generalization of the theory of distributive laws to the groupoid colored setting.
	\end{abstract}


		\tableofcontents

\section{Introduction}
To begin, let us revisit an elementary observation about gluing surfaces along their boundaries.  Let $\Sigma=\Sigma_{g,n}$ be a compact, oriented and connected surface of genus $g$ with $n$ numbered boundary components.  Suppose that our surface is stable, i.e.\ $2g+n\geq 3$.  The surface $\Sigma$ can be cut into pairs of pants (i.e.\ surfaces of the form $\Sigma_{0,3}$) or conversely assembled from pairs of pants by gluing along boundary components.  These gluings come in two types, self-gluings, which glue two boundary components on the same surface, and non-self gluings which glue two boundary components on different surfaces.

Now suppose, by comparison, that we allow our surfaces to be disconnected.  For convenience assume that each connected component is stable.  No longer can every surface be formed by gluing pairs of pants along their boundary, since that always results in a connected surface.  However one can simply allow ``mergers'' which take the disjoint union of two surfaces to form a new, necessarily disconnected surface: $(\Sigma_{g,n}, \Sigma_{h,m}) \mapsto \Sigma_{g,n}\sqcup\Sigma_{h,m}$.  With mergers, it is again the case that all surfaces (now including disconnected) can be formed from pairs of pants.  However, in this case we need not perform any non-self gluings because each non-self gluing can be replaced with a merger of the two surfaces to be glued, followed by a self gluing of the two relevant boundary components.  Since these two boundary components belong to a single surface after the merger, the gluing which connects them is a self-gluing.

This seemingly trivial distinction has a significant impact on the quadratic presentation, and hence Koszul duality theory, of two operads which encode these types of surface gluings.  The algebras over these two operads are called ``modular operads'' and ``Schwarz modular operads'' respectively.  The notion of modular operads, introduced by Getzler and Kapranov in \cite{GeK2} is a generalization of operads allowing composition along any {\it connected} graph.  Contemporaneously, Schwarz introduced, in \cite{S}, a variation of this notion which allows for composition along any graph, including disconnected graphs.  We wish to remark here that we only consider gluing of boundary components and disjoint union; also of interest is the connected sum operation, for which we refer to \cite{CB22} for more detail.

Both modular and Schwarz modular operads are themselves encoded by quadratic operads, call them $\op{MO}$ and $\op{SM}$.  As in the discussion above, the operad $\op{MO}$ is generated by non-self gluings and self-gluings.  The operad $\op{SM}$ is generated by mergers and self-gluings.  There is a natural map of operads between them $\op{MO}\to\op{SM}$, but crucially it is not a quadratic map since the non-self gluings are generators in the source, but not in the target.

The operad $\op{MO}$ was shown to be Koszul in \cite{WardMP}, and we generalized this result in \cite{KFC} to a large family of Feynman categories encoding operad-like structures.  The heart of the proof of that result was the identification of the fiber of the Koszul map with a polytope formed by a blow-down of a permutahedron.  However these arguments do not apply to the operad $\op{SM}$ -- indeed in this case the fiber need not be a polytope at all.  The main purpose of this article, then, is to prove:

\begin{theorem}\label{thm}  The operad $\op{SM}$ encoding Schwarz modular operads is Koszul.
\end{theorem}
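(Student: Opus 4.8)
The plan is to realize $\op{SM}$ as the composite $\op{S}\circ\op{M}$ promised in the title, exhibit this composite as arising from a distributive law, and then deduce Koszulness from Koszulness of the two factors. First I would isolate the two quadratic suboperads of $\op{SM}$ generated respectively by the mergers (call this $\op{S}$) and by the self-gluings (call this $\op{M}$); both inclusions are maps of quadratic operads. The normal-form heuristic from the Introduction -- any composite of mergers and self-gluings can be rearranged so that all mergers are performed first and all self-gluings last, with non-self-gluings never needed -- is precisely what suggests that the canonical map $\op{S}\circ\op{M}\to\op{SM}$ of underlying (groupoid-colored) collections should be an isomorphism.

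Second, I would establish that each factor is Koszul. Since mergers are associative and commutative with respect to disjoint union, $\op{S}$ is a colored analogue of the commutative operad, and its Koszulness should follow from a direct identification of its Koszul dual together with a monomial basis. For $\op{M}$ the relevant structural fact is that self-gluings along disjoint pairs of boundary components commute up to the evident symmetry; I would prove its Koszulness by exhibiting an explicit quadratic Koszul dual and a PBW-type basis, or, if convenient, by a further distributive-law decomposition of $\op{M}$ itself.

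Third, I would set up the distributive law and verify that it is well defined. The mixed quadratic relations of $\op{SM}$ encode exactly how a self-gluing commutes past a merger, which gives a rewriting rule $\lambda\colon\op{M}\circ\op{S}\to\op{S}\circ\op{M}$; the task is to check that $\lambda$ is compatible with the relations of both factors, i.e.\ that the resulting quadratic operad $\op{S}\circ_\lambda\op{M}$ has underlying collection $\op{S}\circ\op{M}$. By the groupoid-colored generalization of Markl's criterion, this reduces to a confluence (diamond) check in weight three: the finitely many ways of rewriting a composite of three generators drawn from $\op{S}$ and $\op{M}$ must agree, and the equivariant groupoid-colored collections must match. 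Once this is verified, the general theorem -- again in its groupoid-colored form -- that a well-defined distributive law between Koszul operads produces a Koszul operad applies, and we conclude that $\op{SM}=\op{S}\circ_\lambda\op{M}$ is Koszul.

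The hard part will be twofold. On the foundational side, the entire argument must be carried out in the groupoid-colored setting, because the ``colors'' are stable surfaces carrying automorphisms (symmetric-group actions on boundary labels together with genus data), so the bimodule and distributive-law formalism has to be redeveloped to respect these groupoid actions; this is the promised generalization and is where most of the technical care resides. On the concrete side, the weight-three confluence verification is genuinely more delicate than in the modular case $\op{MO}$: self-gluings can create or destroy connectivity and can interact with one another in several inequivalent ways, which is exactly why the Koszul fiber here need not be a polytope and why the polytopal methods of \cite{KFC} do not transfer. Pinning down the normal form and proving the rewriting confluent -- equivalently, showing that $\op{S}\circ\op{M}\to\op{SM}$ is injective, surjectivity being immediate -- is the crux on which the whole proof rests.
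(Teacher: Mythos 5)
Your proposal is correct and follows essentially the same route as the paper: split $\op{SM}$ into the suboperads of self-gluings and of mergers, prove each factor Koszul, encode the mixed relations as a rewriting rule, and reduce everything to the groupoid-colored Diamond Lemma in weight three, which is indeed where the real work lies (the paper carries this out via an explicit ``shadow'' invariant on labeled trees, and regards the groupoid-colored generalization of distributive laws as routine rather than as the main technical burden). Three small corrections: your labels are swapped relative to the paper --- there $\op{S}$ denotes the self-gluing factor and $\op{M}$ the merger factor, so that $\op{S}\circ\op{M}$ (self-gluings at the root, mergers above) realizes your stated normal form ``mergers first, self-gluings last,'' whereas with your labeling the displayed composites and the rewriting rule $\lambda$ contradict that normal form; the colors are just natural numbers with symmetric-group actions, with no genus data; and the paper obtains Koszulity of the self-gluing factor by citing the cubical Feynman category theorem of \cite{KFC}, not by a PBW-type basis, while the merger factor is handled by identifying its bar construction with that of the commutative operad.
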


The statement of this theorem presupposes our ability to encode $\op{SM}$ as a quadratic operad.  This can't naively be done with the classical theory of colored operads, and we choose to work with two closely related generalizations: groupoid colored operads and Feynman categories.

To prove Theorem $\ref{thm}$ we first establish

\begin{lemma}\label{dllem} The theory of distributive laws and applications to Koszulity generalizes to the groupoid colored setting.
\end{lemma}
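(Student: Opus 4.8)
The plan is to transplant the classical theory of distributive laws for (weight-graded, augmented) quadratic operads into the category of groupoid-colored collections, verifying at each step that only structural features available in that category are used, and isolating the single homological input that is not purely formal. First I would fix the ambient setting: groupoid-colored collections (species) form an abelian category equipped with the composition monoidal product $\circ$---whose defining coend runs over the color groupoid---for which operads are the monoids. The free operad, quadratic data, the bar and cobar constructions, the Koszul dual cooperad $\op{A}^{\uex}$, and the Koszul complex are all defined exactly as in the uncolored case, following the Feynman-category formalism of \cite{KFC}. The notion of Koszulity I would adopt is acyclicity of the Koszul complex (equivalently, that the canonical twisting morphism $\op{A}^{\uex}\to\op{A}$ is a Koszul morphism), since this is phrased entirely through the monoidal structure and so migrates without change.

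Next I would set up distributive laws. Given quadratic groupoid-colored operads $\op{A}$ and $\op{B}$, a distributive law is a morphism of collections $\lambda\colon\op{B}\circ\op{A}\to\op{A}\circ\op{B}$ compatible with the structure maps, from which one builds $\op{A}\vee_\lambda\op{B}$ as the quotient of the free operad on the generators of $\op{A}$ and $\op{B}$ by the relations of each together with the rewriting relations encoded by $\lambda$. There is a canonical surjection $\op{A}\circ\op{B}\twoheadrightarrow\op{A}\vee_\lambda\op{B}$ of collections, and I would isolate as a hypothesis the requirement that it be an isomorphism---the ``correct size'' condition, which plays the role of confluence in a rewriting system. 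All of this is diagrammatic and carries over once $\circ$ is known to be monoidal; the only point needing care is that generators and relations now carry groupoid equivariance, so the free operad and the quotient must be formed in the equivariant category.

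The heart of the lemma is that when $\op{A}$ and $\op{B}$ are Koszul and $\lambda$ has correct size, $\op{A}\vee_\lambda\op{B}$ is Koszul, with $(\op{A}\vee_\lambda\op{B})^{\uex}\cong\op{B}^{\uex}\vee_{\lambda^\vee}\op{A}^{\uex}$ for the transpose distributive law $\lambda^\vee$. I would prove this by the standard filtration argument: filter the Koszul complex of $\op{A}\vee_\lambda\op{B}$ by the number of $\op{B}$-generators (equivalently, by $\op{A}$-weight), so that on the associated graded the rewriting relation degenerates and the complex becomes the Koszul complex of the decoupled composite $\op{A}\circ\op{B}$. The correct-size hypothesis ensures the filtration is exhaustive with the expected associated graded, so it suffices to show the decoupled complex is acyclic; this follows from a Künneth-type identification of it with the composite $K(\op{A})\circ K(\op{B})$ of the two individual Koszul complexes, acyclic precisely because $\op{A}$ and $\op{B}$ are Koszul. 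Collapse of the resulting spectral sequence then gives acyclicity of the original complex.

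The step I expect to be the genuine obstacle---and the only place the coloring matters beyond bookkeeping---is this Künneth identification, together with the claim that passing to homology commutes with $\circ$. In the uncolored characteristic-zero setting both rest on exactness of the composition product, which uses Maschke semisimplicity of the symmetric-group actions; in the groupoid-colored setting the coend defining $\circ$ takes coinvariants over the automorphism groups of the color groupoid, so I would need these automorphism groups to be finite with semisimple group algebras---again guaranteed over a field of characteristic zero. Establishing that $\circ$ is exact in each variable and that the relevant coinvariant functors commute with homology, so that the spectral sequence converges to the homology of the composite, is where the bulk of the genuinely new verification lies; the remainder is a faithful port of the classical proof.
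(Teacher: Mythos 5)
Your overall strategy coincides with the paper's: Section \ref{secrev} likewise builds the groupoid-colored framework ($\mathbb{V}$-colored sequences, the free operad monad, quadratic duality, and the bar construction via the cubical Feynman category of \cite{KFC}) and then asserts that the classical arguments of Markl, Vallette and Loday--Vallette carry over verbatim. Your identification of the single non-formal ingredient---exactness of $\circ$ and the K\"unneth identification $H_\ast(-\circ-)\cong H_\ast(-)\circ H_\ast(-)$, secured by semisimplicity of the relevant group algebras---is precisely what the paper's standing assumptions (all $Hom_{\mathbb{V}}(v,v)$ finite, ground field of characteristic $0$) are there to provide, and your filtration/spectral-sequence sketch correctly reconstructs the Koszulity-transfer core of the classical proof, which the paper cites (\cite[Theorem 8.6.5]{LV}) rather than reproduces.

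There is, however, one concrete shortfall. You take as a hypothesis that the surjection $p\colon\op{A}\circ\op{B}\to\op{A}\vee_\lambda\op{B}$ is an isomorphism in all weights, whereas the paper's Diamond Lemma (Lemma \ref{diamondlem}) assumes only that $p$ is injective in weight $3$ and \emph{concludes} the global isomorphism together with Koszulity. That local-to-global confluence step---the actual diamond-lemma content, which in the classical argument again exploits the Koszulity of the two factors---cannot be folded into the hypothesis: in the paper's application (Theorem \ref{main}) injectivity is verified only in weight $3$, via the shadow maps, so a version of the theory that hypothesizes correct size in every weight would leave the main theorem of the paper out of reach. Your port should therefore include the weight-$3$ criterion of \cite[Theorem 8.6.5]{LV}, not merely the transfer theorem for a distributive law of full correct size; your closing remark that the remainder is a faithful port of the classical theory presumably intends this, but it is the one piece of the theory that deserves to be made explicit rather than assumed.
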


The theory of distributive laws as a way to establish Koszulity of an operad was pioneered by Markl in  \cite{MarklDist} and carried on by Vallette in \cite{Val}.  An expository account can be found in the textbook of Loday-Vallette \cite[Section 8.6]{LV}. We hasten to emphasize that the generalization to the groupoid colored setting is completely straightforward.

Given Lemma $\ref{dllem}$, the heart of the proof of Theorem $\ref{thm}$ is verification of the groupoid colored variant of the Diamond Lemma to show that the Feynman category $\op{SM}$ has operad structure inherited from a distributive law on the pleythism $\op{S}\circ\op{M}$ formed from the Koszul operad of self-gluings $\op{S}$ and the Koszul operad of mergers $\op{M}$.  Once this analysis has been performed, the result follows from the Koszulity of constituents $\op{S}$ and $\op{M}$.  The fact that $\op{S}$ is Koszul follows from our previous work \cite{KFC}, where-as the Koszulity of $\op{M}$ is a straight forward generalization of Koszulity of the commutative operad.

Koszulity results for operads encoding operad-like structures have recently been  developed in a number of cases \cite{WardMP}, \cite{BMO}, \cite{KFC}.  What makes the example of Schwarz modular operads unlike these previous examples is that its encoding operad is not self dual, not even up-to parity reversal.  Indeed $(\op{SM})^!$ is a genuinely new object.  The Feynman transform (aka bar construction) of a Schwarz modular operad is an algebra over $\op{SM}^!$, and the construction, on the one hand, follows from the general theory and yet, on the other hand, is an example of a new type of operad-like structure which seems not to have been studied before.

A parallel situation arises in recent work of Stoeckl \cite{KurtS} who generalizes the theory of Grobner bases to prove Koszulity of operads encoding variants of props and wheeled props.  A similar comment applies to the Koszul duals of these objects, where-in the horizontal composition is commutative up to permutation, hence dual to a Lie-like merging operation.  We feel these new operad like structures are deserving of further study.

Finally, we mention a relationship between Schwarz modular operads and BV algebras.  The totalization of an operad or cyclic operad classically carries a Lie algebra structure.  In the case of a $\mathfrak{K}$-modular operad this Lie structure comes with a compatible differential corresponding to the sum over all self-gluings.  We showed in \cite{KWZ} that if such a $\mathfrak{K}$-modular operad has a compatible merging, this dg Lie structure lifts to a BV algebra.  Crucially this requires the odd or $\mathfrak{K}$ twisted analog of the self-gluings, and we denote the operad encoding such structure by $\op{S}^!\op{M}$.  An immediate consequence of Theorem $\ref{thm}$ tells us that this operad is Koszul, from which we derive:

\begin{corollary}\label{cor}  The totalization of a $\op{S}^!\op{M}_{\infty}$-algebra carries a $BV_\infty$-algebra structure inducing the results of \cite{KWZ} on homology.
\end{corollary}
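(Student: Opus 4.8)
The plan is to bootstrap the statement from Theorem \ref{thm} in three stages: upgrade the Koszulity of $\op{SM}$ to Koszulity of the twisted operad $\op{S}^!\op{M}$, pass to the minimal model, and then transport the resulting homotopy structure through the totalization functor, checking that it recovers on homology the strict $BV$-structure of \cite{KWZ}.

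First I would show that $\op{S}^!\op{M}$ is Koszul. By Theorem \ref{thm} the operad $\op{SM}=\op{S}\circ\op{M}$ arises from a distributive law whose factors $\op{S}$ and $\op{M}$ are Koszul, and Lemma \ref{dllem} ensures the groupoid colored Diamond Lemma applies. The operad $\op{S}^!\op{M}=\op{S}^!\circ\op{M}$ is obtained by replacing only the self-gluing factor $\op{S}$ by its Koszul dual $\op{S}^!$. This substitution alters neither the color and arity bookkeeping of the distributive law nor the Diamond condition---only the internal signs and the quadratic relations of the self-gluing part, which dualize compatibly. Since $\op{S}^!$ is again Koszul (the Koszul dual of a Koszul operad is Koszul) and $\op{M}$ is unchanged, re-running the distributive-law argument of Theorem \ref{thm} yields Koszulity of $\op{S}^!\op{M}$. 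This is the \emph{immediate consequence} referred to above, and I emphasize that $\op{S}^!\op{M}$ is only a partial dual of $\op{SM}$ (one dualizes $\op{S}$ but keeps the commutative merger $\op{M}$), reflecting the two pieces of data---a commutative product and an odd operator $\Delta$---of a $BV$-structure.

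Koszulity then supplies the minimal model $\op{S}^!\op{M}_\infty=\Omega\big((\op{S}^!\op{M})^{\uex}\big)$, so that an $\op{S}^!\op{M}_\infty$-algebra $A$ is exactly a homotopy $\op{S}^!\op{M}$-algebra. Next I would promote the totalization functor to the homotopy level. On generators it sends the merger to the commutative product and the $\mathfrak{K}$-twisted self-gluing to the operator $\Delta$, which is precisely the data whose strict version \cite{KWZ} assembles into a $BV$-algebra on $\mathrm{tot}(A)$. To obtain a genuine $BV_\infty$-structure I would construct a morphism realizing totalization at the $\infty$-level, matching the cobar differential of $\op{S}^!\op{M}_\infty$---the higher mergers and higher twisted self-gluings---with the generating operations of the Koszul resolution $BV_\infty=\Omega\big(BV^{\uex}\big)$. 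Concretely, since both source and target are cofibrant with explicit quadratic-dual generators, I would extend the strict comparison map of \cite{KWZ} cell by cell, matching each higher generator of $BV^{\uex}$ against the totalized higher bracket produced by the corresponding self-gluing syzygy in $(\op{S}^!\op{M})^{\uex}$; functoriality of totalization for $\infty$-morphisms then transports the full $\op{S}^!\op{M}_\infty$-structure to a $BV_\infty$-structure on $\mathrm{tot}(A)$.

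Finally, passing to homology collapses the higher homotopies, so the induced product, operator and bracket on $H_*(\mathrm{tot}(A))$ coincide with those of the strict construction of \cite{KWZ}; thus the $BV_\infty$-structure \emph{induces the results of \cite{KWZ} on homology}. The main obstacle is the middle stage: verifying that totalization lifts coherently from the strict to the $\infty$-setting, i.e.\ that every higher cobar operation of $\op{S}^!\op{M}_\infty$ is accounted for by a $BV_\infty$-operation with the correct signs and no unmatched terms. This is exactly the point at which the two explicit Koszul resolutions must be compared termwise---equivalently, where the totalization is identified with the relevant part of the Feynman/bar transform---and the Koszulity established in the first stage is what makes such a comparison feasible.
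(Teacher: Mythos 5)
Your outline assembles the right ingredients (Koszulity of $\op{S}^!\op{M}$, the cobar resolution, comparison with \cite{KWZ}), but the step that actually produces the $BV_\infty$-structure --- your middle stage --- is a program rather than a proof, and it is precisely the step the paper is organized to avoid. You propose to lift totalization to the $\infty$-level by comparing the Koszul resolution of $BV$ with that of $\op{S}^!\op{M}$ ``cell by cell,'' but you give no construction of the map, no sign verification, and no obstruction argument; you yourself flag this as ``the main obstacle.'' Such a comparison is not a routine extension: totalization mixes all arities through invariants and coinvariants, so it is not induced by a morphism of (colored) operads, and matching generators of the quadratic dual of $BV$ against ``self-gluing syzygies'' would require explicit control of $(\op{S}^!\op{M})^!$, which the paper never computes. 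The paper's proof replaces all of this with two observations absent from your proposal: (i) since every automorphism group in $\mathbb{V}$ is finite and the ground field has characteristic $0$, homology commutes with invariants and coinvariants, so $H_\ast((co)lim(A))\cong (co)lim(H_\ast(A))$; (ii) by the quasi-isomorphism of Equation $\ref{res}$, the homology $H_\ast(A)$ of an $\infty$-odd-SMO is a \emph{strict} odd SMO, so Theorem $\ref{KWZ}$ already supplies a strict BV structure on $(co)lim(H_\ast(A))=H_\ast((co)lim(A))$. The $BV_\infty$-structure on $(co)lim(A)$ itself is then obtained by the homotopy transfer theorem along the homotopy equivalence (over a field) between a complex and its homology, and by construction it induces the \cite{KWZ} structure on homology. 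No termwise comparison of resolutions is ever needed.

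Your first stage also has a flaw, though a reparable one. You claim Koszulity of $\op{S}^!\op{M}$ by ``re-running the distributive-law argument of Theorem $\ref{thm}$.'' But the verification of the Diamond condition in Theorem $\ref{main}$ is carried out for operads in \emph{sets}: injectivity in weight $3$ is proved via the shadow maps on pure trees and then imported to vector spaces through the span functor. The operad $\op{S}^!$ is not the span of a set operad --- its relations $\bar\xi_{i,j}\bar\xi_{k',l'}=-\bar\xi_{k,l}\bar\xi_{i',j'}$ carry signs --- so that argument does not re-run as stated. The paper instead deduces the Diamond condition for the twisted rewriting rule $\bar\lambda$ from Theorem $\ref{main}$ by a dimension count: the weight-$3$ components of $\op{S}^!\circ\op{M}$ and $\op{S}^!\vee_{\bar\lambda}\op{M}$ have the same dimensions as their untwisted counterparts, which Theorem $\ref{main}$ shows to agree, so the canonical surjection is forced to be an isomorphism.
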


This paper is organized as follows.  In section $\ref{secrev}$ we give a short review of Feynman categories, groupoid colored operads and the Diamond lemma.  In section 3 we analyze the Feynman category encoding Schwarz modulars operads.  We use this analysis in Section 4 to prove Theorem $\ref{thm}$.  Finally we establish Corollary $\ref{cor}$ in Section 5.

{\small {\it Acknowledgment:}  RK wishes to thank Albert Schwarz for pointing him to his beautiful paper \cite{S} several years ago.  RK gratefully acknowledges support from the Simons Foundation.  BW would also like to gratefully acknowledge support from the Simons Foundation. }

\section{Review of Feynman Categories and Distributive Laws}\label{secrev}  In this section we give a recollection of Feynman categories and groupoid colored operads and adapt the theory of distributive laws to this setting.  This section is largely review, albeit of results that might be a bit spread out.  In addition to our previous work \cite{KW,WardMP, KFC}, we are influenced heavily by the pioneering work of Markl \cite{MarklDist} and the book of Loday and Vallette \cite{LV}.

Throughout this section, we fix a small groupoid $\mathbb{V}$, all of whose automorphism groups $Hom_\mathbb{V}(v,v)$ are finite.  In the subsequent sections of this paper we will specialize to a particular $\mathbb{V}$ whose objects are natural numbers and whose morphisms are symmetric groups.

\subsection{$\mathbb{V}$-trees}\label{treesec}

We take the viewpoint of graphs as consisting of vertices and half-edges. We refer to \cite{KW} for extensive detail on this category of graphs.  Half-edges may be fused in pairs to form edges, the set of which is denoted $Ed(-)$, or may stand alone in which case they are called legs, the set of which is denoted $Leg(-)$.  By an $n$-tree we refer to a simply connected graph with $n+1$ legs, labeled bijectively by the set $\{0\cdc n\}$.  We view an $n$-tree as rooted by declaring the root to be the leg labeled by $0$.  In particular this has us view $n$-trees as directed graphs, directed toward the root, with $n$ ordered input legs, which we call leaves.  

Given a vertex $v$ of a tree $\mathsf{t}$ the arity of $v$ is the number of incoming half edges and the valence is the number of adjacent half edges.  In particular the arity is always one less than the valence.  In this paper we will assume that all vertices have valence $\geq 2$, meaning each vertex has at least one input.  A vertex of arity 1 (valence 2) will be called unary.  A vertex of arity 2 (valence 3) will be called binary.

In an $n$-tree, the union of the set of half-edges with the set of vertices is a partially ordered set.  An element $x$ is greater than an element $y$ if $y$ lies along the unique path connecting $x$ to the root leg.  We use the terminology {\it above} and {\it below} to refer to this partial ordering, with the convention that the root leg is below all other elements.  Given a vertex $v$ of arity $m$ in an $n$-tree $\mathsf{t}$, if we remove $v$ along with the half-edges adjacent to or below $v$, the result is $m$ rooted trees, whose leaves are labeled with sets which together partition $\{1\cdc n\}$.  We call these trees the {\it branches} of $\mathsf{t}$ at $v$.  We order the set of branches at a vertex by declaring branch $b_e$ to be less than branch $b_f$ if the least leaf label of $b_e$ is less than the least leaf label of $b_f$.

A $\mathbb{V}$-coloring of an $n$-tree $\mathsf{t}$ is a function $ Ed(\mathsf{t})\cup Leg(\mathsf{t})\to Ob(\mathbb{V})$. Given a $\mathbb{V}$-colored $n$-tree whose leg $i$ is labeled by object $v_i$, we say the tree is of {\it type} $\vec{v}=(v_1\cdc v_n;v_0)$.  

\subsection{$\mathbb{V}$-corollas}

From $\mathbb{V}$ we define a new small groupoid, called $\mathbb{V}$-corollas, as follows.  The set of objects of $\mathbb{V}$-corollas is
$$
\{ (v_1\cdc v_r; v_0) \ | \ r\geq 1 \text{ and } v_i\in ob(\mathbb{V})   \}.
$$
We will often use the notation $\vec{v}$ to denote an object of $\mathbb{V}$-corollas, and write $|\vec{v}|=r$ for $\vec{v}=(v_1\cdc v_r; v_0)$, with such an object said to be of length $r$.  The symmetric group $S_r$ acts on the set of objects of length $r$ by $$\sigma\vec{v} = \sigma(v_1\cdc v_r; v_0) := (v_{\sigma^{-1}(1)}\cdc v_{\sigma^{-1}(r)};v_0).$$

The set of morphisms $Hom(\vec{v},\vec{w})$ in $\mathbb{V}$-corollas is the empty set unless $\vec{w}=\sigma\vec{v}$ for some $\sigma \in S_r$, in which case
$$
Hom(\vec{v},\sigma\vec{v}) = Aut(v_1)\times ... \times Aut(v_r) \times Aut(v_0)^{op}.
$$

Composition of morphisms $Hom(\vec{v},\sigma\vec{v})\times Hom(\sigma\vec{v},\tau(\sigma\vec{v})) \to Hom(\vec{v},\tau(\sigma\vec{v}))$
is given by 
$$
(\phi_1\cdc\phi_r;\phi_0^{op})\circ (\psi_1\cdc\psi_r;\psi_0^{op}) = (\phi_{\sigma(1)}\psi_1\cdc \phi_{\sigma(r)}\psi_r; \psi^{op}_0\phi_0^{op})
$$

Note $\phi_{\sigma(i)}\psi_i$ are composible in $\mathbb{V}$ since $\sigma(\vec{v})$ has the object $v_i$ in position $\sigma(i)$.

\begin{definition}
	A $\mathbb{V}$-colored sequence is a functor from $\mathbb{V}$-corollas.
\end{definition}
In this paper the target $\op{C}$ of such a functor will be either the category of sets, of vector spaces or of chain complexes, taken over a field of characteristic $0$. 
Unpacking this definition, a $\mathbb{V}$-colored sequence $A$ valued in the category $\op{C}$ is the following data:
\begin{itemize}
	\item An object $A(v_1\cdc v_r;v_0) \in \op{C}$ for each list $(v_1\cdc v_r;v_0)$ of objects in $\mathbb{V}$.
	\item A right action of the group $\times_{i\geq 1} Aut(v_i)$ on $A(v_1\cdc v_r;v_0)$.
	\item A left action of the group $Aut(v_0)$ on $A(v_1\cdc v_r;v_0)$.
	\item A compatible isomorphism $A(v_1\cdc v_r;v_0) \to A(v_{\sigma^{-1}(1)}\cdc v_{\sigma^{-1}(r)};v_0)$ for each $\sigma \in S_r$.
\end{itemize}

Just as uncolored operads are built from symmetric sequences, $\mathbb{V}$-colored operads are built from $\mathbb{V}$-colored sequences.  Indeed, note that if $\mathbb{V}$ were a groupoid with one object and only the identity morphism, then a $\mathbb{V}$-colored sequence would merely be a symmetric sequence.

\subsection{Feynman Category of $\mathbb{V}$-colored trees}  In this section we will consider a monad $\mathbb{T_V}$ on the category of $\mathbb{V}$-colored sequences, built from the notion of a $\mathbb{V}$-colored tree.  The algebras over this monad may equivalently be described as the functors from a cubical Feynman category, from which we access the notions of quadratic and Koszul duality, after \cite{KW}.

To begin, fix a $\mathbb{V}$-colored sequence $A$.  Given a $\mathbb{V}$-colored tree $\mathsf{t}$, any vertex $w\in vert(\mathsf{t})$ determines an unordered list of objects of $\mathbb{V}$ by reading off the colors of the adjacent edges and legs.  Choose any order on the input colors, call them $v_1,\cdc v_r$, and suppose the output of $w$ is labeled by $v_0\in \mathbb{V}$.  Define
$$
A(w) = \left(\coprod_{\sigma\in S_r} A(v_{\sigma(1)}\cdc v_{\sigma(r)};v_0)\right)_{S_r}
$$
with $S_r$ acting diagonally on the sum and the summands, the result being independent of the chosen order on the inputs.

We then define $A(\mathsf{t})\in\op{C}$ of the form
\begin{equation}\label{eq}
A(\mathsf{t}) = \left(\otimes_{w\in vert(\mathsf{t})} A(w)\right)/\sim	
\end{equation}
where $\sim$ denotes the coinvariants by the action of $Aut(v_e)$ along any edge $e$ of color  $v_e\in \mathbb{V}$.  In particular if $e$ is an edge adjacent to vertices $x$ below $y$ then $Aut(v_e)$ acts on $A(x)$ on the right and $A(y)$ on the left and we take the balanced tensor product over all edges of $\mathsf{t}$.

Define $\mathbb{T_V}(A)(\vec{v})$ to be the coproduct of the $A(\mathsf{t})$ over all isomorphism classes of $\mathbb{V}$-colored trees $\mathsf{t}$ of type $\vec{v}$.  This allows us to view $\mathbb{T_V}\circ\mathbb{T_V}(A)$ as nested $\mathbb{V}$-colored trees, i.e.\ as a $\mathbb{V}$-colored tree each of whose vertices is in turn labeled with a $\mathbb{V}$-colored tree of matching type.  Upon forgetting the nesting, such a nested $\mathbb{V}$-colored tree determines a $\mathbb{V}$-colored tree, which determines a natural transformation $\mathbb{T_V}\circ\mathbb{T_V}\Rightarrow \mathbb{T_V}$ which induces the structure of a monad on $\mathbb{T_V}$.  

There is a symmetric monoidal category, $\hat{\mathbb{T}}_{\mathbb{V}}$, for which $Fun_\tensor(\hat{\mathbb{T}}_{\mathbb{V}},\op{C})\cong \mathbb{T_V}$-algebras in $\op{C}$.  The objects of $\hat{\mathbb{T}}_{\mathbb{V}}$ are monoidal products of lists of the form $(v_1\cdc v_n;v_0)$.  The morphisms in $\hat{\mathbb{T}}_{\mathbb{V}}$ are of two types: automorphisms and monoidal products of $\mathbb{V}$-colored trees; the source of such a morphism is indicated by cutting the edges of the tree and the target by contracting the edges.  The category $\hat{\mathbb{T}}_{\mathbb{V}}$ is an example of a Feynman category \cite{KW}.  Moreover, this example is a cubical Feynman category, hence Koszul \cite{KFC}.  This result gives us access to Koszul duality in the category of $\mathbb{T}_{\mathbb{V}}$-algebras.

We define a $\mathbb{V}$-colored operad to be an algebra over $\mathbb{T_V}$.
This means in particular that for every $\mathbb{V}$-colored tree $\mathsf{t}$ of type $\vec{v}$ there is a composition map
$
A(\mathsf{t}) \to A(\vec{v}).
$  
These compositions are in turn generated by single edged compositions
\begin{equation}\label{circi}
A(v_1\cdc v_r; v_0)\tensor A(w_1\cdc w_s; v_i) \stackrel{\circ_i}\to A(v_1\cdc v_{i-1}, w_1\cdc w_s,v_{i+1}\cdc v_r; v_0 ),
\end{equation}
which are equivariant with respect to the balanced $Aut(v_i)$ action.

The following example is helpful in keeping track of the axiomatics.

\begin{example}\label{algebraex}
	Let $X$ be a $\mathbb{V}$-module, i.e.\ a functor $\mathbb{V}\to\op{C}$.  There is an associated $\mathbb{V}$-colored sequence $End_X$ given by
	$$
	End_{X}(\vec{v}) = Hom_\op{C}(X(v_1)\tensor\dots\tensor X(v_n), X(v_0))
	$$
	and with the obvious automorphism and symmetric group actions.  This $\mathbb{V}$-colored sequence has the structure of a $\mathbb{V}$-colored operad by composition of functions.  By definition, an algebra over a $\mathbb{V}$-colored operad $\op{P}$ is a morphism $\op{P}\to End_X$.
\end{example}

Strictly speaking, $\mathbb{T_V}$ is the monad for non-unital $\mathbb{V}$-colored operads.  A related notion of unital $\mathbb{V}$-colored operads can be defined as the algebras over a slight modification of the monad $\mathbb{T_V}$ by adjoining the $\mathbb{V}$-colored ``trees'' with no vertices.  Algebras over the unital version of this monad may be equivalently defined as monoids for a monoidal product of $\mathbb{V}$-colored sequences.  This monoidal products was first given by Petersen \cite{Pet}, see also Theorem 2.24 of \cite{WardMP}.  Given two $\mathbb{V}$-colored sequences $A$ and $B$, we denote this monoidal product by $A\circ B$.  It is defined as a coproduct over trees having two levels, the lower labeled by $A$ and the upper labeled by $B$, subject to the identifications of moving an automorphism along an edge as in Equation $\ref{eq}$.

\subsection{Quadratic $\mathbb{V}$-operads} 
By the general theory for algebras over a monad, the forgetful functor from $\mathbb{T_V}$-algebras to $\mathbb{V}$-colored sequences has a left adjoint $F$ with $F(A) = \mathbb{T_V}(A)$, so we adopt the notation $F(-)$ in what follows.  We remark that in general, the coproduct $F(A)=\mathbb{T_V}(A)$ could be infinite, but in the examples within this paper this is not the case i.e.\ our examples are reduced in the sense of \cite[Definition 2.35]{WardMP}.

Let's now specialize to the category $\op{C}=Vect$.  Assign a grading, called the weight, to a vector space $F(A)(\vec{v})$ by declaring $A(\mathsf{t})$ to be weight $|vert(\mathsf{t})|$.  Let us denote the weight $i$ summand by $F(A)^i$.  Recall we are working with the non-unital variant of groupoid colored operads, and so the weight is always positive.

Given a collection of subsets $B(\vec{v})\subset F(A)(\vec{v})$ we define the ideal generated by $B$, denoted $\langle B \rangle$, to be the smallest $\mathbb{V}$-colored sequence containing $B$ which is closed under the structure maps of the form $b\circ_i a$ and $a\circ_j b$ for $b\in B$.  A $\mathbb{V}$-colored operad $\op{P}$ is quadratic if there exists $\mathbb{V}$-colored sequences $A$ and $R$ with $R\subset F(A)^2$, such that $\op{P}\cong F(A)/\langle R \rangle$.  The $\mathbb{V}$-colored sequences $A$ and $R$ are called the generators and relations of $\op{P}$ respectively.

Conversely, given a $\mathbb{V}$-colored sequence $A$ and a $\mathbb{V}$-colored sequence $R\subset F(A)^{(2)}$, we denote the associated quadratic operad simply by $(A,R)$.  By abuse of terminology, given any family of sets $R^\prime(\vec{v}) \subset R(\vec{v})$ such that $\langle R^\prime\rangle=\langle R \rangle$, we may write $(A,R)=(A,R^\prime)$, given that they generate the same quadratic operad.  In particular $R^\prime \subset R \subset \langle R \rangle$ may all be referred to as ``the'' relations.

Given a quadratic $\mathbb{V}$-colored operad, we define its quadratic dual in analogy with the classical theory.  Specifically, in the case that $A$ and $R$ are finite dimensional for each $\vec{v}$, we may define the quadratic dual operad to be $(A,R)^! := (\Sigma A^\ast, R^\perp)$ where $\Sigma$ denotes a shift in degree, $A^\ast$ denotes the linear dual and $R^\perp\subset F(\Sigma A^\ast)$ denotes those linear functionals which vanish on $R$.

Since the category of $\mathbb{T_V}$-algebras is the category of representations of a cubical, hence Koszul, Feynman category, it makes sense to apply the bar/cobar construction, see \cite[Section 2.7]{WardMP}.  In analogy with the classical setting, we define a quadratic $\mathbb{V}$-colored operad to be Koszul if the canonical map between the linear dual of the bar construction and the quadratic dual operad is a quasi-isomorphism.  

\subsection{The generalized Diamond lemma.}

In \cite{MarklDist}, Markl generalized the notion of distributive laws from associative algebras to operads, and proved that an operad defined by a distributive law is Koszul if its constituent pieces are as well.  These techniques were refined by Vallette \cite{Val} and given an expository account by Loday and Vallette in \cite[Section 8.6]{LV}.  The goal of this section is to observe that these now standard Koszulity results lift from uncolored operads to groupoid colored operads.

Let $V$ and $W$ be $\mathbb{V}$-colored sequences.  The weight $2$ summand of the free operad $F(V\oplus W)^{(2)}\subset F(V\oplus W)$ may be decomposed into a direct sum of four $\mathbb{V}$-colored sequences,
$$
F(V\oplus W)^{(2)} = F(V)^{(2)} \oplus F(W)^{(2)} \oplus V\circ_{(1)}W\oplus W\circ_{(1)}V,
$$
where $V\circ_{(1)}W$ denotes those 2-vertex trees whose lower vertex is labeled by $V$ and upper vertex is labeled by $W$ and vice-versa for $W\circ_{(1)}V$.  A natural transformation $\lambda\colon W\circ_{(1)}V \Rightarrow V\circ_{(1)}W$ will be called a rewriting rule.   Given such a rewriting rule $\lambda$, we define the $\mathbb{V}$-colored sequence 
$$D_\lambda:=\{ \mathsf{t}-\lambda(\mathsf{t}) \colon \mathsf{t} \in W\circ_{(1)} V \} \subset F(V\oplus W)^{(2)}.$$

Now suppose $\op{A}=(V,R)$ and $\op{B}=(W,S)$ are quadratic $\mathbb{V}$-colored operads.  In the presence of a rewriting rule $\lambda\colon W\circ_{(1)}V \Rightarrow V\circ_{(1)}W$ we define the quadratic operad $$\op{A}\vee_\lambda \op{B}:=(V\oplus W, R\oplus D_\lambda\oplus S). $$

There are morphisms of $\mathbb{V}$-colored sequences $\op{A}\circ\op{B}\hookrightarrow F(V\oplus W) \twoheadrightarrow \op{A}\vee_\lambda \op{B}$.  Call the composite $p$.  Each $p(\vec{v})$ is surjective, since a tree labeled by $V$ and $W$ is equivalent, through repeated application of the relations in $D_\lambda$, to a tree for which the vertices labeled by $V$ are below the vertices labeled by $W$.

We now state the groupoid colored version of the diamond lemma for distributive laws.  Although technically a generalization of the classical operad case,  the proof follows verbatim from that of eg \cite[Theorem 8.6.5]{LV}.

\begin{lemma} (Diamond Lemma) \label{diamondlem}
	Let $\op{A}=(V,R)$ and $\op{B}=(W,S)$ be Koszul operads and let $\lambda\colon W\circ_{(1)}V \Rightarrow V\circ_{(1)}W$ be a rewriting rule.  If $p$ restricted to $\op{A}\circ\op{B}^{(3)}(\vec{v})$ is injective for each $\vec{v}$, then $p$ is an isomorphism of $\mathbb{V}$-colored sequences and the operad $\op{A}\circ\op{B}\cong \op{A}\vee_\lambda \op{B}$ is Koszul.
\end{lemma}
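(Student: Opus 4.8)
The plan is to mirror the classical proof of the Diamond Lemma for distributive laws (e.g.\ \cite[Theorem 8.6.5]{LV}), checking at each step that nothing in the argument is sensitive to the passage from symmetric sequences to $\mathbb{V}$-colored sequences. The key structural observation is that the entire argument is organized by the weight grading on $F(V\oplus W)$, and that this grading, together with the decomposition of $F(V\oplus W)^{(2)}$ into the four summands $F(V)^{(2)}\oplus F(W)^{(2)}\oplus V\circ_{(1)}W\oplus W\circ_{(1)}V$, is preserved verbatim when we decorate edges and legs with objects of $\mathbb{V}$ and impose the edge-coinvariant identifications of Equation~\ref{eq}. Since $p$ is already known to be a pointwise surjection (by repeated application of $D_\lambda$ to push $V$-vertices below $W$-vertices), the content of the lemma is injectivity in each arity $\vec{v}$, and I would first reduce this global injectivity to the stated weight-$3$ condition.

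First I would set up the rewriting-theory framework in the colored setting: interpret the rewriting rule $\lambda\colon W\circ_{(1)}V\Rightarrow V\circ_{(1)}W$ as a way to orient the relations $D_\lambda$, so that a normal form is a tree in which every $V$-labeled vertex lies below every $W$-labeled vertex, i.e.\ an element of the image of $\op{A}\circ\op{B}\hookrightarrow F(V\oplus W)$. The map $p$ is then identified with the comparison between the ambient free operad modulo all relations and these normal forms. The heart of the classical argument is that confluence of the rewriting system is a local, weight-$3$ phenomenon: the two ways of reordering a three-vertex tree with a $W$–$V$–$V$ or $V$–$W$–$W$ configuration (the two minimal ``critical'' overlaps, corresponding to the two ways a $W$ can be moved past two $V$'s, or symmetrically) must agree. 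The hypothesis that $p$ restricted to $\op{A}\circ\op{B}^{(3)}(\vec{v})$ is injective is exactly the assertion that these critical weight-$3$ ambiguities are resolvable; by the colored analog of Newman's diamond lemma, local confluence plus termination of the rewriting (which holds because each application of $\lambda$ strictly decreases a well-founded measure counting inversions of $W$-above-$V$) yields global confluence, hence that $p(\vec{v})$ is injective in every weight and every arity. Here the only colored subtlety is that ``trees'' now carry $\mathbb{V}$-colorings on edges and the coproduct $A(w)$ is taken over orderings modulo $S_r$, but because the relations $R$, $S$, and $D_\lambda$ are $\mathbb{V}$-colored sequences and $\lambda$ is a natural transformation, the rewriting is equivariant and descends to the edge-coinvariants; I would verify that the critical-pair analysis can be carried out fiberwise over each type $\vec{v}$ without interference from the automorphism actions.

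Having established that $p$ is an isomorphism of $\mathbb{V}$-colored sequences, the Koszulity conclusion follows formally. The isomorphism $\op{A}\circ\op{B}\cong\op{A}\vee_\lambda\op{B}$ exhibits $\op{A}\vee_\lambda\op{B}$ as equipped with a distributive law in the colored sense, and one invokes the colored lifting of the standard theorem (which is the substance of Lemma~\ref{dllem}, asserted earlier) that an operad arising from a distributive law between Koszul operads is itself Koszul. Concretely, one compares the Koszul complex of $\op{A}\vee_\lambda\op{B}$ with the composite (or ``convolution'') of the Koszul complexes of $\op{A}$ and $\op{B}$; the diamond condition guarantees that the underlying $\mathbb{V}$-colored sequences match, and acyclicity of the total complex follows from acyclicity of the factors by a spectral sequence or Künneth-type argument, all of which go through verbatim in the colored setting since the relevant (co)bar constructions are those of the cubical, hence Koszul, Feynman category $\hat{\mathbb{T}}_{\mathbb{V}}$ discussed above.

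The main obstacle I anticipate is purely bookkeeping rather than conceptual: ensuring that the equivariance and edge-coinvariant identifications interact cleanly with the rewriting at the level of critical pairs, so that the weight-$3$ injectivity hypothesis genuinely captures all local ambiguities. In the uncolored case one reasons about abstract three-vertex trees, whereas here each such tree comes with a $\mathbb{V}$-type on its internal edges and legs, and one must confirm that the $Aut(v_e)$-actions along edges do not create new critical configurations beyond the two classical ones. Because $\lambda$ is assumed to be a natural transformation of $\mathbb{V}$-colored sequences (hence compatible with all automorphisms and the $S_r$-actions), I expect this compatibility to hold automatically, reducing the colored diamond condition to the same weight-$3$ check as in the uncolored theory — which is precisely why, as the authors note, the generalization is ``completely straightforward.''
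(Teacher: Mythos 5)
Your proposal is correct and takes essentially the same route as the paper: the paper's entire proof of Lemma~\ref{diamondlem} is the single remark that the argument ``follows verbatim from that of'' \cite[Theorem 8.6.5]{LV}, and your write-up is a faithful elaboration of exactly that classical proof (surjectivity of $p$ via rewriting to normal form, weight-$3$ injectivity as resolvability of the critical overlaps, confluence/termination giving injectivity in all weights, then the Koszul-complex comparison), together with the check that equivariance of $\lambda$ and the edge-coinvariant identifications cause no new colored phenomena. The paper asserts this compatibility without spelling it out, so your additional bookkeeping is consistent with, not divergent from, its approach.
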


\section{Schwarz Modular Operads}

In this section we define a groupoid colored operad $\op{SM}$ whose algebras are Schwarz modular operads.  We fix the groupoid $\mathbb{V}$ for the remainder of this article as follows:
\begin{equation}\label{V}
ob(\mathbb{V}) = \mathbb{N} \ \ \text{ and } \ \ Hom_\mathbb{V}(n,m) =\begin{cases} S_n \ & \text{ if } m=n \\ \emptyset & \text{ if } m \neq n \end{cases}
\end{equation}
where $S_n$ denotes the symmetric group. 

First we recall the definition of Schwarz modular operads. 
 This definition makes sense in any symmetric monoidal category $\op{C}$.  However, subsequently we will focus on the cases of $\op{C}=\mathsf{Sets}$ and $\op{C}=\mathcal{V}ect$.  We often use the mathsf and mathcal fonts to distinguish objects in these two categories, e.g.\ $\text{span}(\mathsf{X}) = \op{X}$.

\begin{definition}\label{smodef} \cite{S} A Schwarz modular operad (abbreviated SMO) in a symmetric monoidal category $(\op{C},\tensor)$ is a symmetric sequence $A$ in $\op{C}$ along with morphisms
	\begin{itemize}
		\item $\ast_{n,m}\colon A(n)\tensor A(m) \to A(n+m)$, called mergers,
		\item $\xi_{n-1,n} \colon A(n)\to A(n-2)$, called self-gluings,
	\end{itemize} 
	which satisfy the following seven axioms.  Here we write $\cdot$ for composition of functions.
	\begin{enumerate}
		\item[(S1)] Equivariance of self-gluings I:  $\sigma \cdot\xi_{n-1,n} = \xi_{n-1,n}\cdot \sigma$ for $\sigma \in S_{n-2}\subset S_n$.
		\item[(S2)] Equivariance of self-gluings II: $\xi_{n-1,n} = \xi_{n-1,n}\cdot (n-1 \ n)$, where $(n-1 \ n)\in S_{n}$ denotes the transposition exchanging $n-1$ and $n$.	
		\item[(S3)] Equivariance of mergers: $(\sigma,\tau)\cdot\ast_{n,m} =  \ast_{n,m}\cdot(\sigma,\tau)$ for $\sigma \in S_n,\tau\in S_m$.
		\item[(S4)] Symmetry of mergers: $\sigma_{n,m} \cdot\ast_{n,m} =  \ast_{m,n} \cdot s_{A(n),A(m)}$ where $s$ denotes the swap map in the symmetric monoidal category $\op{C}$ and $\sigma_{n,m}$ is the $(n,m)$-shuffle which adds $m$ to $1,...,n$ and subtracts $n$ from $n+1,..., n+m$.
		\item[(R1)] Commutativity of self-gluings: $\xi_{n-3,n-2}\cdot \xi_{n-1,n} = \xi_{n-3,n-2} \cdot \xi_{n-1,n} \cdot \sigma$ for $\sigma=(n-1 \ n-3)(n-2 \ n)$.
		\item[(R2)] Associativity of mergers: $\ast_{n+m,l}\cdot(\ast_{n,m}\tensor id_{A(l)}) = \ast_{n,m+l}\cdot (id_{A(n)}\tensor\ast_{m,l})$
		\item[(R3)]  Distributivity of mergers over self-gluings $\xi_{n+m-1,n+m}\cdot \ast_{n,m} = \ast_{n,m-2}\cdot (id_{A(n)}\tensor\xi_{m-1,m})$.
	\end{enumerate}		
\end{definition}
This definition is taken almost verbatim from \cite{S}, except the notation is different and the axioms have been reordered.

\subsection{Self-gluings and reindexing}  Given a SMO, and integers $1\leq i<j \leq n$ we define $\xi_{i,j}:= \xi_{n-1,n}\cdot \rho_{i,j}$, where $\rho_{i,j}\in S_n$ is the unique permutation satisfying $\rho_{i,j}(i)=n-1, \rho_{i,j}(j)=n$ and $a<b \Rightarrow \rho_{i,j}(a)<\rho_{i,j}(b)$ for $a,b \in \{1\cdc n\}\setminus \{i,j\}$. 
Observe that, unlike in Definition $\ref{smodef}$, the expressions $\xi_{i,j}$ and $\rho_{i,j}$ assume $n$ has been fixed, and we write $\xi_{i,j}^n$ (resp.\ $\rho_{i,j}^n$) if we need to indicate the source.  The operations $\xi_{i,j}$ will also be referred to as self-gluings.

Below we will need to work carefully with the operations $\xi_{i,j}$ so let us give an explicit description of the permutation $\rho_{i,j}$. For this we make the following definition.  Let $a,b,c$ be distinct natural numbers with $a<b$.  We define $\epsilon_{a,b}(c)$ to be:
\begin{equation*}
	\epsilon_{a,b}(c) = \begin{cases}
		0 & \text{ if } c<a \\
		1 & \text{ if } a<c<b \\
		2 & \text{ if } b<c \\
	\end{cases}
\end{equation*}

With this definition we have
\begin{lemma} \label{rhocor}
The permutation $\rho_{i,j}\in S_n$ is given by
$$\rho_{i,j}(k) = 
\begin{cases}
n-1 & \text{ if } k = i \\ 
n & \text{ if } k = j \\ 
k - \epsilon_{i,j}(k) & \text{ else.} \\ 
\end{cases}
$$
\end{lemma}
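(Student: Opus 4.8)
The plan is to argue directly from the three defining properties of $\rho_{i,j}$, since the values on the complement $\{1,\dots,n\}\setminus\{i,j\}$ are completely forced once the target is identified. First I would record that, because $\rho_{i,j}$ is a bijection of $\{1,\dots,n\}$ sending $i$ to $n-1$ and $j$ to $n$, its restriction to the complement is a bijection onto $\{1,\dots,n-2\}$. Combined with the requirement that this restriction be order-preserving, this pins it down uniquely: an order-preserving bijection from a linearly ordered $(n-2)$-element set onto $\{1,\dots,n-2\}$ must send each element to its rank, i.e.\ to the number of elements of the domain that are $\leq$ it.

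Next I would compute this rank explicitly for $k \in \{1,\dots,n\}\setminus\{i,j\}$. The number of elements of the complement that are $\leq k$ equals the total number of elements $\leq k$ in $\{1,\dots,n\}$, namely $k$, minus the number of elements of $\{i,j\}$ that are $\leq k$. Since $k\neq i,j$, the latter count is exactly the number of elements of $\{i,j\}$ lying strictly below $k$, which is precisely $\epsilon_{i,j}(k)$ by its three-case definition. Hence $\rho_{i,j}(k) = k - \epsilon_{i,j}(k)$, as claimed, while the values at $i$ and $j$ hold by definition.

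This is a routine combinatorial bookkeeping exercise, so I do not anticipate any genuine obstacle; the only points requiring care are the off-by-one in identifying the rank with a count of elements below, and the observation that ``$\leq k$'' may be replaced by ``$<k$'' when intersecting with $\{i,j\}$ precisely because $k$ is taken outside $\{i,j\}$. If one prefers a verification rather than a derivation, one could instead check directly that the displayed formula defines a bona fide permutation satisfying the three characterizing conditions and then appeal to the uniqueness asserted in the definition of $\rho_{i,j}$; this is essentially the same count read in reverse.
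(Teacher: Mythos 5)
Your proposal is correct. It is worth noting that your \emph{main} argument and the paper's proof are the two directions of the same uniqueness argument: the paper simply observes that the first two cases restate the definition and then \emph{verifies} that $k \mapsto k - \epsilon_{i,j}(k)$ is increasing on $\{1,\dots,n\}\setminus\{i,j\}$, so that the displayed formula must coincide with $\rho_{i,j}$ by the uniqueness built into its definition; you instead \emph{derive} the formula by identifying the restriction of $\rho_{i,j}$ with the unique order-preserving bijection onto $\{1,\dots,n-2\}$ and computing the rank of each $k$ as $k$ minus the number of elements of $\{i,j\}$ strictly below $k$. Your closing paragraph describes the paper's route exactly, and your own assessment --- that it is ``the same count read in reverse'' --- is accurate. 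Neither direction buys much over the other for a lemma this routine, though your derivation has the mild advantage of explaining \emph{why} the correction term is $\epsilon_{i,j}(k)$ rather than merely confirming that it works.
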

\begin{proof}  The first two rows are a restatement of the definition of $\rho_{i,j}$.  For the third line it suffices to check that given $i,j$, the function $k - \epsilon_{i,j}(k)$ is increasing on the domain $\{1\cdc n\}\setminus\{i,j\}$.\end{proof}

\subsection{Two self gluings.}\label{2sec}  We record some terminology related to the reindexing of pairs of self-gluings for future use.

Fix $n\geq 4$.  An ordered pair of self gluings is a 4-tuple $(a,b,c,d)$ such that $1\leq a < b \leq n$ and $1\leq c < d \leq n-2$. Let $\mathsf{O}=\mathsf{O}_n$ be the set of ordered pairs of self-gluings.

An unordered pair of self-gluings is an orbit of the set of 4-tuples $(i,j,k,l)$ of distinct natural numbers under the action of the centralizer of $(12)(34)\in S_4$.  Let $\mathsf{U}=\mathsf{U}_n$ be the set of unordered pairs of self-gluings having entries $\leq n$.

There is a map $\phi\colon\mathsf{O}\to\mathsf{U}$ given by the formula
\begin{equation}\label{phidef}
\phi(i,j,\rho_{i,j}(k),\rho_{i,j}(l)) = [(i,j,k,l)]
\end{equation}
note such an $i,j,k,l$ are distinct since their images under $\rho_{i,j}$ are distinct.  The map $\phi$ is surjective and the fiber $\phi^{-1}(u)$ has size two for each $u\in\mathsf{U}$.  To see this, note that there are two representatives $(i,j,k,l)$ of a given $u$ for which $i<j$ and $k<l$, leading to a fiber of at least two, but on the other hand one easily counts the orders of these sets to confirm $|\mathsf{O}|/|\mathsf{U}|=2$.

Define an involution $\iota\colon \mathsf{O}\to\mathsf{O}$ by switching the elements in the fiber of $\phi$.  In particular $\iota$ is characterized by the properties $\iota(a,b,c,d) \neq (a,b,c,d)$ and $\phi(\iota(a,b,c,d)) = \phi((a,b,c,d))$.  It is given by the formula
\begin{equation}\label{iota}
\iota(i,j,\rho_{i,j}(k),\rho_{i,j}(l)) = (k,l,\rho_{k,l}(i),\rho_{k,l}(j)).
\end{equation}
We write $\iota_n$ in place of $\iota$ if we need to specify the index of the source and target.  The following lemma is straightforward.

\begin{lemma}\label{plusn}
	Fix a natural number $n$ and define $p_n\colon \mathsf{O}_m \to\mathsf{O}_{n+m}$ by the formula $p_n(i,j,k,l)=(i+n,j+n,k+n,l+n)$.  Then $p_n\circ\iota_m = \iota_{n+m}\circ p_n$.
\end{lemma}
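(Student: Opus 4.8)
The plan is to reduce the claimed equivariance to a single shift-compatibility property of the reindexing permutations $\rho$, after which both sides of the identity can be computed directly from the defining formula \eqref{iota} for $\iota$. The one thing to watch is that the values $n-1$ and $n$ taken by $\rho_{i,j}(i)$ and $\rho_{i,j}(j)$ depend on the source $\mathsf{O}_n$, so I will write $\rho^m_{i,j}\in S_m$ and $\rho^{n+m}_{i+n,j+n}\in S_{n+m}$ to keep the two permutations distinct and resist conflating them.

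First I would prove the shift-equivariance
\begin{equation*}
\rho^{n+m}_{i+n,j+n}(k+n) = \rho^m_{i,j}(k) + n \qquad \text{for all } k\in\{1\cdc m\}.
\end{equation*}
This follows immediately from Lemma \ref{rhocor} together with the translation invariance $\epsilon_{i+n,j+n}(k+n)=\epsilon_{i,j}(k)$, which holds because each of the three defining inequalities $c<a$, $a<c<b$, $b<c$ is preserved under adding $n$ to all arguments. On indices $k\neq i,j$ the identity reads $(k+n)-\epsilon_{i,j}(k) = (k-\epsilon_{i,j}(k))+n$, and on $k=i$ (resp.\ $k=j$) both sides equal $n+m-1=(m-1)+n$ (resp.\ $n+m=m+n$), so the boundary cases hold as well.

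Next I would observe that every element of $\mathsf{O}_m$ has the parametrized form $(i,j,\rho^m_{i,j}(k),\rho^m_{i,j}(l))$ appearing in \eqref{iota}: given $(a,b,c,d)\in\mathsf{O}_m$, set $i=a$, $j=b$, and use that $\rho^m_{i,j}$ restricts to an order-preserving bijection $\{1\cdc m\}\setminus\{i,j\}\to\{1\cdc m-2\}$ to solve uniquely for $k<l$ with $\rho^m_{i,j}(k)=c$ and $\rho^m_{i,j}(l)=d$. It then suffices to check the identity on such an element. On the one hand, $p_n\circ\iota_m$ sends it first to $(k,l,\rho^m_{k,l}(i),\rho^m_{k,l}(j))$ by \eqref{iota}, then to $(k+n,l+n,\rho^m_{k,l}(i)+n,\rho^m_{k,l}(j)+n)$. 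On the other hand, $p_n$ produces $(i+n,j+n,\rho^m_{i,j}(k)+n,\rho^m_{i,j}(l)+n)$, which by the shift-equivariance equals $(i+n,j+n,\rho^{n+m}_{i+n,j+n}(k+n),\rho^{n+m}_{i+n,j+n}(l+n))$; applying $\iota_{n+m}$ via \eqref{iota} gives $(k+n,l+n,\rho^{n+m}_{k+n,l+n}(i+n),\rho^{n+m}_{k+n,l+n}(j+n))$, and a second use of the shift-equivariance, now with the pair $(i,j)$ replaced by $(k,l)$, rewrites this as $(k+n,l+n,\rho^m_{k,l}(i)+n,\rho^m_{k,l}(j)+n)$. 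The two expressions coincide, which proves the claim.

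The only genuine obstacle is bookkeeping: one must apply the shift-equivariance in both directions — once to push $p_n$ past $\rho$ into the $(n+m)$-indexed permutation, and once to pull it back out after $\iota_{n+m}$ has exchanged the two pairs — without ever identifying $\rho^m_{i,j}$ with $\rho^{n+m}_{i+n,j+n}$. Once the translation invariance of $\epsilon$ is isolated, no genuine computation remains, consistent with the remark in the text that the lemma is straightforward.
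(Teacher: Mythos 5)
Your proof is correct and complete: the shift-equivariance $\rho^{n+m}_{i+n,j+n}(k+n)=\rho^m_{i,j}(k)+n$ (via Lemma \ref{rhocor} and the translation invariance of $\epsilon$), together with the observation that every element of $\mathsf{O}_m$ has the parametrized form appearing in Equation \ref{iota}, is exactly what is needed, and both composites are computed correctly. Note that the paper offers no written proof of this lemma --- it is simply declared straightforward --- so your argument is a faithful filling-in of the omitted details rather than a divergence from the authors' route.
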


\subsection{Quadratic Presentation of $\op{SM}$.}

The Feynman category encoding Schwarz modular operads was introduced in \cite{KW} and considered in greater detail in \cite{CB22}.  Specializing to the case $\op{C}=\op{V}ect$ or $dg\op{V}ect$, Schwarz' original definition immediately lends itself to a quadratic presentation of this Feynman category. In this subsection, we analyze this presentation in detail.

\subsubsection{Generators of $\op{SM}$.}
Recall (Equation $\ref{V}$) we have fixed $\mathbb{V}$ to be the groupoid whose objects are natural numbers and whose automorphisms are symmetric groups. In particular a $\mathbb{V}$-module is a symmetric sequence, i.e.\ a sequence of objects $A(n)$ along with an action of the symmetric group $S_n$ on $A(n)$.   We shall define a quadratic $\mathbb{V}$-colored operad $\op{SM}:=F(\op{E})/\langle \op{R} \rangle$ with the property that a symmetric sequence is a SMO in Vect if and only if it is an $\op{SM}$-algebra in Vect.  

For this, we first work in the category of sets and define a $\mathbb{V}$-colored sequence of generators, call it $\mathsf{E}$, in the category of sets:
\begin{equation}\label{generators}
\mathsf{E}(\vec{v}) = \begin{cases}
	\xi_{n-1,n}\times (S_{n}/\langle (n-1,n) \rangle)     & \text{ if } \vec{v}=(n;n-2) \\
	S_{n+m}\times \ast_{n,m} & \text{ if } \vec{v}=(n,m;n+m) \\ 
	\emptyset	 & \text{ else } 
\end{cases}
\end{equation}
The symmetric group actions are as follows.  The group $S_n$ acts on the right of $\mathsf{E}(n;n-2)$ by multiplication on cosets.  The group $S_{n-2}$ acts on the left of $\mathsf{E}(n;n-2)$ by multiplication on cosets.  We regard $S_n\times S_m\subset S_{n+m}$ as usual ($S_n$ permutes the first $n$ and $S_m$ permutes the last $m$ objects).  Then $S_n\times S_m$ acts on the $S_{n+m}$ factor of $\mathsf{E}(n,m;n+m)$ by right multiplication and $S_{n+m}$ acts by left multiplication.  Finally, the generator of $S_2$ corresponds to the unique $S_{n+m}$-equivariant map $\mathsf{E}(n,m;n+m) \to \mathsf{E}(m,n;n+m)$ sending $\ast_{n,m}$ to $\sigma_{n,m} \cdot \ast_{m,n}$, where $\sigma_{n,m}$ is as in Definition $\ref{smodef}$.

As above we then define $\xi_{i,j}:= \xi_{n-1,n}[\rho_{i,j}]$ where $[\rho_{i,j}]$ denotes the coset of $\rho_{i,j}$ in the set $S_n/\langle (n-1 \ n) \rangle.$

\begin{lemma}\label{ijlem}
	$\mathsf{E}(n,n-2) = \{\sigma\xi_{i,j} \ | \ \sigma \in S_{n-2} \text{ and } 1\leq i < j \leq n \}$.
\end{lemma}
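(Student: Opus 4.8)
The plan is to identify both sides as subsets of the coset space $S_n/\langle(n-1\ n)\rangle$ and to exhibit an explicit bijection. First I would unwind the actions: writing $H=\langle(n-1\ n)\rangle$, an element of $\mathsf{E}(n;n-2)$ is $\xi_{n-1,n}[\rho]$ for a coset $[\rho]=H\rho$, and the identity $\xi_{n-1,n}=\xi_{n-1,n}(n-1\ n)$ of axiom (S2) says precisely that $\xi_{n-1,n}[\rho]$ depends only on the right coset $H\rho$. Because $S_{n-2}$ and $H$ permute disjoint letters they centralize one another, so the left $S_{n-2}$-action reads $\sigma\cdot[\rho]=[\sigma\rho]$ and is well defined; hence $\sigma\xi_{i,j}=\xi_{n-1,n}[\sigma\rho_{i,j}]$ corresponds to the coset $H\sigma\rho_{i,j}$. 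The lemma is therefore equivalent to the assertion that every right coset $H\rho$ is of the form $H\sigma\rho_{i,j}$ with $\sigma\in S_{n-2}$ and $1\leq i<j\leq n$.

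The inclusion $\supseteq$ is immediate, so the content is surjectivity. Given $\rho\in S_n$ I would define $\{i,j\}=\{\rho^{-1}(n-1),\rho^{-1}(n)\}$ with $i<j$, and set $\mu:=\rho\rho_{i,j}^{-1}$. Since $\rho_{i,j}^{-1}(n-1)=i$ and $\rho_{i,j}^{-1}(n)=j$, which is immediate from the defining property of $\rho_{i,j}$, we get $\mu(n-1)=\rho(i)$ and $\mu(n)=\rho(j)$, and by the choice of $i,j$ these together form the set $\{n-1,n\}$. Thus $\mu$ preserves $\{n-1,n\}$ setwise, hence also preserves the complementary block $\{1\cdc n-2\}$. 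Decomposing $\mu=\sigma h$ with $\sigma\in S_{n-2}$ the restriction to $\{1\cdc n-2\}$ and $h\in H$ the restriction to $\{n-1,n\}$, the two factors commute and $\rho=h\sigma\rho_{i,j}$, whence $H\rho=H\sigma\rho_{i,j}$. This realizes $\xi_{n-1,n}[\rho]=\sigma\xi_{i,j}$ and establishes $\subseteq$.

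Finally I would record that this representation is unique by a cardinality count: the indexing set has size $(n-2)!\cdot\binom{n}{2}=n!/2=|S_n/H|$, so the surjection just constructed is automatically a bijection. The only genuinely delicate point is the bookkeeping of the first paragraph, namely matching the left $S_{n-2}$-action and right $S_n$-action against the correct (right) cosets and verifying that $H$ is precisely, and not merely contained in, the isotropy absorbed by $\xi_{n-1,n}$. Once this coset dictionary is fixed, the structural fact driving the argument is that the discrepancy $\rho\rho_{i,j}^{-1}$ block-decomposes along $\{1\cdc n-2\}\sqcup\{n-1,n\}$, which is guaranteed exactly because $\rho_{i,j}$ is the order-preserving normalization singled out in Lemma \ref{rhocor}.
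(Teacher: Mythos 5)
Your proof is correct, but it runs the argument in the opposite direction from the paper's. Both proofs pass through the same coset dictionary, reducing the lemma to the claim that $\{\sigma\rho_{i,j} \mid \sigma\in S_{n-2},\ 1\leq i<j\leq n\}$ is a complete, irredundant set of representatives for $S_n$ modulo $H=\langle(n-1\ n)\rangle$; the difference is which half is done by hand and which by counting. The paper proves \emph{distinctness} directly, observing that $\sigma\rho_{i,j}\rho_{k,l}^{-1}\tau^{-1}$ can never equal $(n-1\ n)$ and lies in $H$ only if it is the identity, forcing $(i,j)=(k,l)$ and $\sigma=\tau$; surjectivity then follows from the count $(n-2)!\binom{n}{2}=n!/2$. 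You prove \emph{surjectivity} directly, via the explicit factorization $\rho=h\sigma\rho_{i,j}$ with $\{i,j\}=\{\rho^{-1}(n-1),\rho^{-1}(n)\}$, and get distinctness from the same count. Your route is constructive: it gives an algorithm putting an arbitrary generator into the normal form $\sigma\xi_{i,j}$, and it already establishes the stated set equality before any counting (the lemma as stated only needs the inclusion $\subseteq$, since $\supseteq$ is trivial); the paper's route establishes irredundancy of the parametrization by direct computation, which is the property used later when these expressions serve as a basis-like labeling (e.g.\ in Lemma \ref{purelem} and Proposition \ref{basis}) --- though of course each proof ends up with both properties. One small inaccuracy in your closing remark: the block decomposition of $\mu=\rho\rho_{i,j}^{-1}$ along $\{1\cdc n-2\}\sqcup\{n-1,n\}$ uses only that $\rho_{i,j}$ maps $\{i,j\}$ onto $\{n-1,n\}$, not the order-preserving normalization of Lemma \ref{rhocor}; that normalization pins down the representative $\rho_{i,j}$ but is not what makes your factorization work.
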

\begin{proof}
	Equal here means that we consider the elements on the right as elements on the left, in the sense that  $\sigma\xi_{i,j} = \sigma\xi_{n-1 , n} [\rho_{i,j}] = \xi_{n-1 , n} [\sigma\rho_{i,j}]$. Thus it remains to show that the set of permutations $\sigma\rho_{i,j}$ over all $\sigma\in S_{n-2}$ and all $1\leq i<j \leq n$ forms a set of representatives for the cosets $S_n/\langle (n-1 \ n) \rangle$. 
	
	For this, observe that $\sigma \rho_{i,j}\rho_{k,l}^{-1}\tau^{-1}$ can not exchange $n-1$ and $n$, it fixes $n-1$ iff $i=k$ and fixes $n$ iff $j=l$.  Therefore this permutation is in $\langle (n-1 \ n) \rangle$ only if $\rho_{i,j}= \rho_{k,l}$, which would in turn imply that $\sigma\tau^{-1}\in S_{n-2}\subset S_n$ is the identity.  Hence different expressions of the form $\sigma \rho_{i,j}$ represent different cosets.  Since the number of cosets and the number of such expressions coincide (namely $\frac{n!}{2}=(n-2)!{n \choose 2}$) the claim follows.
\end{proof}

Finally, we define $\op{E}:=\text{span}\{\mathsf{E}\}$, with the inherited symmetric group actions.

\subsubsection{Description of $F(\mathsf{E})$}  By definition, the elements of each set $F(\mathsf{E})(\vec{v})$ are equivalence classes of $\vec{v}$-labeled trees.  However, in this example it is possible to identify a distinguished representative of each class, and in turn describe these sets as a particular class of labeled trees, as we now demonstrate.

Recall that an $\mathsf{E}$-labeled tree refers to a rooted tree, colored by $\mathbb{V}$ along with an element $\mathsf{E}(w)$ for each vertex $w$ of the tree.  In this example, a vertex in an $\mathsf{E}$-labeled tree necessarily has arity 1 or 2.
  
  For a vertex $w$ of arity $1$ we have $E(w) = E(n,n-2)$ for some $n$.  For a vertex $u$ of arity 2 we have $E(u) \cong (E(n_1,n_2;n_1+n_2)\coprod E(n_2,n_1;n_1+n_2))_{S_2}$.  In particular, a bivalent vertex $u$ joins two branches of the tree whose roots are colored by the integers $n_1$ and $n_2$ respectively.  The choice of a representative of the coinvariants $E(u)$ specifies an order of the branches.  Recall (subsection $\ref{treesec}$), we say the branch corresponding to $n_1$ is less than the branch corresponding to $n_2$ if the least leaf label on branch $n_1$ is less than the least leaf label on branch $n_2$.
  
  We call an $\mathsf{E}$-labeled tree ``pure'' if each univalent label is of the form $\xi_{i,j}$ and each bivalent label is of the form $[\ast_{n,m}]$, where the branch corresponding to $n$ is less than the branch corresponding to $m$.  A key feature of a pure tree is that no non-trivial automorphisms appear on the vertex labels. Note that a pure tree can be depicted by a planar, leaf-labeled rooted tree with the property that the least leaf label above any trivalent vertex appears on the left, along with $\xi_{i,j}$ labels of all bivalent vertices.  Define $\mathsf{T}_{\ast,\xi}(\vec{v})$ to be the set of isomorphisms classes of pure $\mathsf{E}$ labeled trees of type $\vec{v}$.  

\begin{lemma}\label{purelem}  For each $\vec{v}=(n_1\cdc n_r; n_0)$ there is a canonical bijective correspondence $F(\mathsf{E})(\vec{v}) \cong S_{n_0}\times\mathsf{T}_{\ast,\xi}(\vec{v}) $ which is compatible with the left $S_{n_0}$-action.
\end{lemma}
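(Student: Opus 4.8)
The plan is to construct, for each class in $F(\mathsf{E})(\vec{v})$, a canonical representative consisting of a pure tree together with a single leftover permutation $g\in S_{n_0}$ recorded on the root leg, and to identify this assignment with the asserted bijection. The inverse map $\Phi\colon S_{n_0}\times\mathsf{T}_{\ast,\xi}(\vec{v})\to F(\mathsf{E})(\vec{v})$ is transparent: given a pure tree $T$ and $g\in S_{n_0}$, let $\Phi(g,T)$ be the $\mathsf{E}$-decorated tree obtained by acting with $g$ on the left (the output side) of the root vertex of $T$. The content is therefore in the forward straightening map $\Psi$ and in showing the two are mutually inverse.

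I would define $\Psi$ as a normalization that processes the tree from the leaves toward the root, each input leg starting with the identity permutation. Two local moves suffice. First, at a self-gluing vertex of input color $n$, the output permutations emitted by its (single) child slide, via the edge-coinvariant relation of Equation \ref{eq}, onto the input; the resulting label lies in $\mathsf{E}(n,n-2)$ and, by Lemma \ref{ijlem}, has a \emph{unique} expression $\sigma'\xi_{i',j'}$ with $\sigma'\in S_{n-2}$ and $i'<j'$. We record the pure label $\xi_{i',j'}$ and slide the leftover $\sigma'$ down the output edge. Second, at a merger vertex the emitted permutations of its two children slide onto the inputs, giving a label in $S_{n_1+n_2}\cdot\ast_{n_1,n_2}$; using the $S_2$-coinvariant that swaps branches (introducing the shuffle $\sigma_{n,m}$) we place the branches in increasing order of least leaf label, obtaining a unique expression $\tau'\cdot[\ast_{n',m'}]$ with branches ordered, whence we record $[\ast_{n',m'}]$ and slide $\tau'$ down. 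Iterating replaces every label by a pure one and accumulates a single permutation $g\in S_{n_0}$ on the root leg, defining $\Psi([T])=(g,T_{\mathrm{pure}})$.

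Granting well-definedness of $\Psi$ (see below), the two maps are mutually inverse. Every move used in $\Psi$ is one of the defining relations of $F(\mathsf{E})$ — an edge-coinvariant slide, the vertex $S_2$-coinvariant, or a rewriting inside a single $\mathsf{E}(w)$ from Lemma \ref{ijlem} — so the straightened tree represents the same class, giving $\Phi\circ\Psi=\mathrm{id}$. For $\Psi\circ\Phi=\mathrm{id}$, note that a pure vertex with identity inputs and correctly ordered branches emits the identity under moves (I) and (II); hence in $\Phi(g,T)$ every non-root vertex emits the identity, and at the root the pure label is preserved while the left factor $g$ is emitted onto the root leg, recovering $(g,T)$. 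Here the rigidity of pure trees — no nontrivial vertex automorphisms, guaranteed by the least-leaf-left convention together with the distinctness of the leaf labels — ensures $T$ is recovered unchanged. Equivariance is then immediate: acting by $h\in S_{n_0}$ at the root merely left-multiplies the emitted permutation, so $\Psi(h\cdot[T])=(hg,T_{\mathrm{pure}})=h\cdot\Psi([T])$.

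The main obstacle is precisely the well-definedness of $\Psi$ on equivalence classes, equivalently the confluence of the straightening: the output must be independent both of the chosen representative of the class and of incidental choices such as the order in which sibling subtrees are processed. I expect to establish this by checking that the two rewriting moves commute with sliding a permutation through a not-yet-processed vertex along an edge, so that pushing all permutations toward the root yields the same normal form regardless of order; the uniqueness clauses in Lemma \ref{ijlem} and in the ordered-merger expression are exactly what make each local step deterministic. Equivalently, one may argue uniqueness of the canonical form directly: if $\Phi(g,T)$ and $\Phi(g',T')$ are equivalent decorated trees, then since the defining relations preserve the underlying tree shape and, after straightening, the pure labels and the root permutation, one forces $T=T'$ and then $g=g'$.
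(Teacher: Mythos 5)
Your proposal is correct and takes essentially the same approach as the paper: the easy inverse acts by $g$ on the left of the root label, while the straightening map pushes permutations down from the leaves toward the root, using the unique coset representatives of Lemma $\ref{ijlem}$ and the least-leaf branch ordering to normalize each vertex, with confluence secured by the observation that the two permutations arriving at a binary vertex commute (one permutes the first $n$ letters, the other the last $m$). The paper's proof is your $\Psi$ carried out edge-by-edge (``pushing $\sigma$ down along $e$'') in any linear order refining the top-to-bottom partial order on edges, and it disposes of order-independence and representative-independence with the same commutation remark you propose.
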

\begin{proof}
	The bijection is given as follows.  On the one hand, given a pure $\vec{v}$-tree and a permutation $\sigma\in S_{n_0}$, we get an element in $F(\mathsf{E})(\vec{v})$ by starting with the given tree and acting on the left of the root label by $\sigma$.
	
	On the other hand, given an element in $F(\mathsf{E})(\vec{v})$, choose a representative.  Such a representative is comprised of a tree $\mathsf{t}$ having only uni- and bi- valent vertices, along with labels of the vertices of the from $\sigma\xi_{i,j}$ (univalent case)  and $\sigma\ast_{n,m}$ (bivalent case).  If a given vertex $w$ is not the root vertex, then there is a unique internal edge below and adjacent to it.  Call this edge $e$, and let $u$ be the vertex below and adjacent to $e$. In this case we can form a new representative of the same element in $F(\mathsf{E})(\vec{v})$ by moving the permutation $\sigma$ from the left of the label of $w$ to the right of the label of $u$.  We may then rewrite the label of $u$ in the above form.  Let us call this process of choosing a new representative ``pushing $\sigma$ down along $e$.''.
	
	Now choose an order on the edges of $\mathsf{t}$ such that if $e$ is above $e^\prime$ then $e>e^\prime.$  Push the permutations labeling the vertices down along the edges in the given order.    Different choices of edge orders which satisfy the above condition will give the same result because when encountering a bivalent vertex, the two possible permutations to be moved from right to left commute (one permutes the first $n$ letters the other permutes the last $m$).  The result of this process, then, is a pure labeled tree, along with a permutation labeling the root, hence an element of $S_{n_0}\times\mathsf{T}_{\ast,\xi}(\vec{v})$.

The facts that this correspondence is equivariant under left multiplication and bijective are immediate. \end{proof}

An algebra $A$ over $F(\mathsf{E})$ (in $\mathsf{Sets}$) or over $F(\mathcal{E})$ (in $\op{V}ect$) has an operation $\xi_{n-1,n}\colon A(n)\to A(n-2)$ which we call a self-gluing and an operation $A(n)\tensor A(m)\to A(n+m)$ which we call a merger.   The equivariance axioms for an algebra over an operad ensure $(S1)-(S4)$ are satisfied.  For axioms (R1)-(R3) we must impose additional relations on the free operad.

\subsubsection{The relations of $\op{SM}$}
We define $\op{R}\subset F(\op{E})$ to be the smallest $\mathbb{V}$-colored subsequence such that:
\begin{itemize}
			\item Each $\op{R}(n,n-4)$ contains $\xi_{n-3,n-2}\cdot \xi_{n-1,n} - \xi_{n-3,n-2} \cdot \xi_{n-1,n} \cdot \sigma$ for $\sigma=(n-1 \ n-3)(n-2 \ n)$.
		\item Each $\op{R}(n,m; n+m-2)$ contains $\xi_{n+m-1,n+m}\cdot \ast_{n,m} - \ast_{n,m-2}\cdot (id\tensor \xi_{m-1,m})$, and
	
	\item Each $\op{R}(n,m,l; n+m+l)$ contains $\ast_{n+m,l}\circ_1\ast_{n,m}-\ast_{n,m+l}\circ_2 \ast_{m,l}$,

\end{itemize}
The notation $\cdot$ here means composition of functions, and in particular takes the place of the usual notation $\circ_1$ for 1-ary functions.

Finally we define the associated quadratic operad $\op{SM}:= F(\op{E})/\langle \op{R} \rangle$.  This description of $\op{R}$ is defined to model axioms (R1)-(R3) of Definition $\ref{smodef}$ and hence:


\begin{proposition}  A Schwarz modular operad (in Vect) is precisely an $\op{SM}$-algebra. 
\end{proposition}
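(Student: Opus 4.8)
The plan is to combine Example~\ref{algebraex} with the free--forgetful adjunction. By definition an $\op{SM}$-algebra is a symmetric sequence $A$ (in $\op{V}ect$) together with a morphism of operads $\op{SM}\to End_A$. Since $\op{SM}=F(\op{E})/\langle\op{R}\rangle$ and $F$ is left adjoint to the forgetful functor, such a morphism is the same datum as a morphism of $\mathbb{V}$-colored sequences $\op{E}\to End_A$ whose canonical extension $F(\op{E})\to End_A$ annihilates the ideal $\langle\op{R}\rangle$. I would therefore organize the proof as two verifications: first, that a morphism of $\mathbb{V}$-colored sequences $\op{E}\to End_A$ is exactly the data of mergers $\ast_{n,m}\colon A(n)\tensor A(m)\to A(n+m)$ and self-gluings $\xi_{n-1,n}\colon A(n)\to A(n-2)$ subject to (S1)--(S4); and second, that this extension kills $\langle\op{R}\rangle$ if and only if (R1)--(R3) hold.

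For the first verification I would unpack the two generating summands. The summand $\mathsf{E}(n;n-2)=\xi_{n-1,n}\times(S_n/\langle(n-1,n)\rangle)$ lands in $End_A(n;n-2)=Hom_\op{C}(A(n),A(n-2))$, and the image of the distinguished generator is the self-gluing $\xi_{n-1,n}$. Equivariance of the morphism for the left $S_{n-2}$- and right $S_n$-actions is precisely axiom (S1), while the coset quotient by $\langle(n-1,n)\rangle$ built into the generator forces $\xi_{n-1,n}\cdot(n-1\ n)=\xi_{n-1,n}$, which is (S2). Dually, $\mathsf{E}(n,m;n+m)$ lands in $Hom_\op{C}(A(n)\tensor A(m),A(n+m))$ and yields the merger $\ast_{n,m}$; equivariance for the right $S_n\times S_m$- and left $S_{n+m}$-actions is (S3), and the nontrivial $S_2$-action on the generator, sending $\ast_{n,m}$ to $\sigma_{n,m}\cdot\ast_{m,n}$, is exactly the symmetry (S4). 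Conversely, operations satisfying (S1)--(S4) assemble uniquely into such a morphism, since by Lemma~\ref{ijlem} the elements $\sigma\xi_{i,j}$ exhaust $\mathsf{E}(n;n-2)$ and, via the reindexing $\xi_{i,j}=\xi_{n-1,n}[\rho_{i,j}]$, their images are all determined by $\xi_{n-1,n}$ together with (S1)--(S2). This is the content of the remark following Lemma~\ref{purelem}.

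For the second verification, the extension $F(\op{E})\to End_A$ factors through the quotient $\op{SM}=F(\op{E})/\langle\op{R}\rangle$ if and only if it sends the chosen generators of $\op{R}$ to zero; because $\langle\op{R}\rangle$ is the operadic ideal they generate, checking the generators suffices. Comparing term by term, the three families defining $\op{R}$ are exactly the differences of the two sides of (R1), (R3) and (R2) respectively, once one uses the convention that the unary self-gluings compose via $\cdot$ while the binary mergers compose via $\circ_1$ and $\circ_2$. Hence the images of these generators vanish in $End_A$ precisely when (R1)--(R3) are satisfied. Together with the first step, the passage from an $\op{SM}$-algebra to its underlying merger and self-gluing operations, and the passage from an SMO to the induced morphism $\op{SM}\to End_A$, are mutually inverse, establishing the proposition.

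I expect the genuine work to lie in the first verification: one must confirm that the symmetries hard-wired into the generating sequence $\op{E}$---the coset quotient $S_n/\langle(n-1,n)\rangle$ in the self-gluing summand and the nontrivial $S_2$-action on the merger summand---encode precisely (S2) and (S4) and impose nothing further, and that the reindexing permutations $\rho_{i,j}$ (Lemma~\ref{rhocor}) and the shuffle $\sigma_{n,m}$ of Definition~\ref{smodef} are matched consistently across the two descriptions. Once these conventions are pinned down, the relation-matching of the second step is a direct comparison.
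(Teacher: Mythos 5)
Your proposal is correct and follows essentially the same route as the paper: the paper establishes this proposition by construction, noting (after Lemma \ref{purelem}) that equivariance of a morphism out of the generators $\op{E}$ encodes exactly the operations together with (S1)--(S4), and that the relations $\op{R}$ are defined precisely to model (R1)--(R3). Your write-up simply makes explicit, via the free--forgetful adjunction, the details that the paper leaves implicit.
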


The $\mathbb{V}$-colored sequence $\op{R}$ is generated by the relations given above and the symmetric group actions.  Below we will have need to work with a basis for $\op{R}$, and we devote the remainder of this section to describing such a basis.

First we record the following fact about the permutations $\rho$.
\begin{lemma}\label{rholem} 
	Fix distinct $i,j,k,l$ with $1\leq i<j\leq n$ and $1 \leq k <l \leq n$.  Define a new 4-tuple $i^\prime, j^\prime, k^\prime, l^\prime:= \rho_{k,l}(i), \rho_{k,l}(j), \rho_{i,j}(k), \rho_{i,j}(l)$.  Then,
	$$ \rho^{n-2}_{k^\prime,l^\prime}\rho^n_{i,j} = (n-1, n-3)(n-2, n) \rho^{n-2}_{i^\prime,j^\prime}\rho^n_{k,l}.$$ 
\end{lemma}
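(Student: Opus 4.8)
The plan is to read the claimed identity as an equation in $S_n$. The two ``smaller'' permutations $\rho^{n-2}_{k',l'}$ and $\rho^{n-2}_{i',j'}$ naturally live in $S_{n-2}$, so I first fix the convention that they are regarded as elements of $S_{n-2}\subset S_n$ under the standard inclusion fixing $n-1$ and $n$; with this convention both sides of the asserted equation are honest elements of $S_n$, and it suffices to check that they agree as functions on $\{1,\dots,n\}$. Since $i,j,k,l$ are distinct and $\rho_{k,l}$ (resp.\ $\rho_{i,j}$) is order-preserving off $\{k,l\}$ (resp.\ $\{i,j\}$), we have $i'<j'$ and $k'<l'$ in $\{1,\dots,n-2\}$, which is what lets the composites on both sides typecheck. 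I would then split the domain $\{1,\dots,n\}$ into the four distinguished points $i,j,k,l$ and the remaining $n-4$ points.

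Write $\tau=(n-1\ n-3)(n-2\ n)$ for the permutation appearing on the right. For the four distinguished points I would evaluate both composites directly, reading right to left and applying Lemma~\ref{rhocor} at each step. For instance, on the right one has $\rho^n_{k,l}(i)=i'$, then $\rho^{n-2}_{i',j'}(i')=n-3$, then $\tau(n-3)=n-1$, while on the left $\rho^n_{i,j}(i)=n-1$ and $\rho^{n-2}_{k',l'}(n-1)=n-1$; so both give $n-1$. The analogous one-line traces give $j\mapsto n$, $k\mapsto n-3$ and $l\mapsto n-2$ on both sides. The one subtlety to track is which of $i,j,k,l$ is carried into the ``glued'' slots $\{n-1,n\}$ by the first permutation and which survives to be relabelled by the second, and then that $\tau$ exchanges the two pairs of slots in exactly the compensating way; this four-case bookkeeping is where an indexing error would most easily slip in, so I would record all four cases explicitly. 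This is the genuine content of the lemma.

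For the remaining points $m\in\{1,\dots,n\}\setminus\{i,j,k,l\}$ I would avoid computing with $k\mapsto k-\epsilon_{i,j}(k)$ at all and instead invoke a uniqueness principle. On the left, $\rho^n_{i,j}$ carries $\{1,\dots,n\}\setminus\{i,j,k,l\}$ strictly increasingly onto $\{1,\dots,n-2\}\setminus\{k',l'\}$, and then $\rho^{n-2}_{k',l'}$ carries that set strictly increasingly onto $\{1,\dots,n-4\}$; hence the left-hand side restricts to a strictly increasing bijection $\{1,\dots,n\}\setminus\{i,j,k,l\}\to\{1,\dots,n-4\}$. On the right, $\rho^n_{k,l}$ followed by $\rho^{n-2}_{i',j'}$ is likewise a strictly increasing bijection onto $\{1,\dots,n-4\}$, and because its image avoids $\{n-3,n-2,n-1,n\}$ the final factor $\tau$ acts as the identity there. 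A strictly increasing bijection between two finite totally ordered sets of equal cardinality is unique, so the two restrictions coincide.

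Combining the two cases shows the two permutations agree on all of $\{1,\dots,n\}$, which proves the identity. I expect the monotonicity/uniqueness step to do almost all of the heavy lifting for free, concentrating the real work (and the only real risk of error) in the four explicit evaluations of the previous paragraph; this is the same order-preserving mechanism underlying Lemma~\ref{rhocor} itself.
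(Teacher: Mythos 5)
Your proof is correct and follows essentially the same route as the paper's: the paper likewise evaluates both composites on the four distinguished points via Lemma~\ref{rhocor} (e.g.\ tracing $i\mapsto n-1$ on the left versus $i\mapsto i'\mapsto n-3\mapsto n-1$ on the right) and then disposes of all remaining inputs by the order-preserving property. Your uniqueness-of-a-strictly-increasing-bijection principle is just a careful formalization of that same appeal to monotonicity, so there is nothing to flag.
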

\begin{proof}  We apply Lemma $\ref{rhocor}$.  The left hand side sends $i$ to $n-1$.  The right hand side sends $i$ to $i^\prime$ and then to $n-3$ and then to $n-1$.  Similarly for $j,k,l$.  For other inputs, we appeal to the order preserving property.
\end{proof}

This definition captures the reindexing which is required to switch the order of the self-gluings, and will be used to prove the first statement of the following Proposition.  We continue use of the notation $i^\prime$ etc.\ as used in Lemma $\ref{rholem}$.

\begin{proposition} \label{basis} The relations $\op{R}$ admit the following bases:
\begin{enumerate}
	\item 	The vector space $\op{R}(n,n-4)$ has  basis
		$$
		\left\{\sigma \cdot ( \xi_{k^\prime l^\prime}\cdot\xi_{ij} - \xi_{i^\prime j^\prime}\cdot\xi_{k l}  ) \right\} $$
		over all $\sigma \in S_{n-4}$ and all $[(i,j,k,l)]\in \mathsf{U}_n$, with convention that we choose a representative of $[(i,j,k,l)]$ for which $j>l$.
		
\item The vector space  $\op{R}(n,m;n+m-2)$ has basis given by the union
$$\{ \sigma \cdot (\xi_{i,j}\cdot \ast_{n,m} - \ast_{n-2,m}\cdot (\xi_{i,j}\tensor id) )  \} \ds\cup \left\{\sigma \cdot (\xi_{k+n,l+n}\cdot \ast_{n,m} - \ast_{n,m-2}\cdot (id \tensor \xi_{k,l}))   \right\},$$ 	
taken over all $\sigma\in S_{n+m-2}$, $1\leq i<j\leq n$ and $1\leq k<l\leq m$.	

\item The vector space $\op{R}(n,m,l;n+m+l)$ has basis given by the union of
$$
\left\{ \sigma \cdot (\ast_{n+m,l}\circ_1\ast_{n,m}-\ast_{n,m+l}\circ_2\ast_{m,l}) \ | \ \sigma \in S_{n+m+l}\right\}
$$
and 
$$(23)\left\{ \sigma \cdot (\ast_{n+l,m}\circ_1\ast_{n,l}-\ast_{n,l+m}\circ_2\ast_{l,m}) \ | \ \sigma \in S_{n+m+l}\right\}$$
where $(23)$ denotes the symmetry $ \op{R}(n,l,m;n+l+m) \stackrel{(23)}\longrightarrow \op{R}(n,m,l;n+m+l)$ (i.e.\ leaf relabeling). 
\end{enumerate}

\end{proposition}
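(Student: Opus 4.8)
The plan is to prove all three statements by a common two‑step template. First, using Lemma $\ref{purelem}$, I identify an explicit monomial basis of each relevant free piece: $F(\mathsf{E})(\vec{v})\cong S_{n_0}\times\mathsf{T}_{\ast,\xi}(\vec{v})$ with the left $S_{n_0}$‑action free on the first factor, so that a basis of the weight‑two part is given by $\sigma\cdot\mathsf{t}$ with $\sigma\in S_{n_0}$ and $\mathsf{t}$ a pure tree. Second, I rewrite the stated generators together with their symmetric‑group orbit as integer combinations of these basis monomials. Because the weight‑two monomials are linearly independent, the listed relations are automatically independent as soon as their leading (pure‑tree) monomials are pairwise distinct across the indexing sets, and they span $\langle\op{R}\rangle^{(2)}(\vec{v})$ once I check that every transported generator lies in their span. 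Thus in each part the work reduces to (i) enumerating the pure trees of the given type and (ii) matching orbit elements to pure‑tree differences.

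For part (3) the pure trees of type $(n,m,l;n+m+l)$ are exactly the three planar binary trees $1\ast(2\ast3)$, $(1\ast3)\ast2$ and $(1\ast2)\ast3$; no self‑gluings can occur, since the output color is the full sum $n+m+l$. The associativity generator is the difference of the last and the first of these. I would check that the associativity generator of type $(n,l,m;n+l+m)$, transported along $(23)$, becomes the difference of the middle and the first tree, and that every other permuted associativity generator lies in the span of these two differences. Linear independence is then immediate from the distinctness of the leading trees $(1\ast2)\ast3$ and $(1\ast3)\ast2$, and the count $3\cdot(n+m+l)!-(n+m+l)!=2\cdot(n+m+l)!$ confirms that the two families span.

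Part (2) is analogous but demands care with the factor structure of the merger. The relevant pure trees split according to whether the self‑gluing lies below the merger (self‑glue within one factor, then merge) or above it (merge, then self‑glue two legs, which are either both in the first factor, both in the second, or one in each). The two listed families correspond to the first‑factor and second‑factor within‑factor cases, and the merger symmetry (S4) is exactly what converts the ``glue the last two legs'' generator into a first‑factor relation. The point I must verify is that no orbit element of the (R3) generator identifies a cross‑factor self‑gluing with a within‑factor one: the cross‑gluing monomials therefore survive in the quotient and never appear among the relations. Granting this, linear independence again follows from distinct leading terms.

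The hardest part is (1), where the reindexing of iterated self‑gluings must be controlled. I would identify the two‑self‑gluing pure trees of type $(n;n-4)$ with the set $\mathsf{O}_n$ of ordered pairs of self‑gluings, so that the involution $\iota$ of subsection $\ref{2sec}$ swaps the two orders of performing a fixed pair of gluings. Using Lemma $\ref{rholem}$, I would show that the (R1) generator together with its orbit is precisely the collection $\{\sigma\cdot(o-\iota(o))\}$ over $\sigma\in S_{n-4}$ and $o\in\mathsf{O}_n$. Since $\iota$ is a fixed‑point‑free involution whose orbit set is $\mathsf{U}_n$, choosing one representative per orbit (the convention $j>l$) and tensoring with the free left $S_{n-4}$‑action produces a basis of the $(-1)$‑eigenspace of the involution induced by $\iota$, which is exactly $\langle\op{R}\rangle(n,n-4)$. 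The main obstacle is the explicit matching of the right $S_n$‑action on the generator with the $\rho$‑reindexing that defines $\iota$ in Equation $\ref{iota}$; this is precisely the identity for which Lemma $\ref{rholem}$ was prepared, and carrying it out carefully is where the real work lies.
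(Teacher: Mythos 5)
Your proposal is correct and follows essentially the same route as the paper's proof: reduce to pure-tree monomials via Lemma \ref{purelem}, control the reindexing under the symmetric group actions via Lemma \ref{rholem}, use the fixed-point-free involution $\iota$ to pair off trees in part (1), match the within-factor versus cross-factor gluings in part (2), and the three binary trees in part (3). The only divergence is one of bookkeeping in part (1): the paper closes that case with a dimension count (the generating relation's line is stabilized by a subgroup of order $8$, so $\dim\op{R}(n,n-4)\leq n!/8$, which equals the cardinality $3(n-4)!\binom{n}{4}$ of the proposed basis), thereby avoiding the inclusion your eigenspace argument still owes, namely that every element of the generator's orbit lies in the span of the listed differences $\sigma\cdot(o-\iota(o))$.
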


\begin{proof}
	Let's use the notation $B(\vec{v})$ to refer to these bases, viewed as subsets of the vector spaces $\op{R}(\vec{v})$.  In particular $B(\vec{v})$ is empty unless $\vec{v}$ is in one of the 3 families indicated above.

{\bf Statement $(1)$:} The vector space $\op{R}(n,n-4)$ is spanned by the $S_n$ orbit of the vector $$\xi_{n-3,n-2}\cdot \xi_{n-1,n} - \xi_{n-3,n-2} \cdot \xi_{n-1,n} \cdot (n-1, n-3)(n-2, n).$$  This follows from the fact that, in this case, the left action of $\sigma \in S_{n-4}$ coincides with the right action of $S_n$ upon restriction to $S_{n-4}\subset S_n$.  This vector is fixed by the subgroup generated by the three permutations $(n-1, n-3)(n-2, n)$, $(n-1 \ n)$ and $(n-3 \ n-2)$ in $S_n$.  This subgroup has order 8, so we can say $\op{R}(n,n-4)$ has dimension at most $n!/8$.  On the other hand, the set $B(n,n-4)$ contains $3(n-4)!{n\choose 4} = n!/8$ vectors.  Thus to show the set $B(n,n-4)$ is a basis for $\op{R}(n,n-4)$ it is sufficient to show it is both linearly independent and contained within $\op{R}(n,n-4)$.

We first show $B(n,n-4)\subset \op{R}(n,n-4)$.  By closure under $S_{n-4}$, it's enough to show that each $\xi_{k^\prime,l^\prime}\xi_{i,j} - \xi_{i^\prime,j^\prime}\xi_{k,l} \in \op{R}(n,n-4)$.  For this
\begin{align*}
\xi_{k^\prime,l^\prime}\xi_{i,j} - \xi_{i^\prime,j^\prime}\xi_{k,l} 
& = \xi_{n-3,n-2}\xi_{n-1,n}\rho^{n-2}_{k^\prime,l^\prime}\rho^n_{i,j} - \xi_{n-3,n-2}\xi_{n-1,n}\rho^{n-2}_{i^\prime,j^\prime}\rho^n_{k,l} \\
& = \xi_{n-3,n-2}\xi_{n-1,n}(n-3 \ n-1)(n-2 \ n)\rho_{i^\prime,j^\prime}\rho_{k,l} - \xi_{n-3,n-2}\xi_{n-1,n}\rho_{i^\prime,j^\prime}\rho_{k,l} \\ 
& = (\xi_{n-3,n-2}\xi_{n-1,n}(n-3 \ n-1)(n-2 \ n)- \xi_{n-3,n-2}\xi_{n-1,n})\rho_{i^\prime,j^\prime}\rho_{k,l} 
\end{align*}
The penultimate step follows from Lemma $\ref{rholem}$.  The final expression is the permutation of an element in $\op{R}(n,n-4)$ hence is in $\op{R}(n,n-4)$.  Thus $B(n,n-4)\subset \op{R}(n,n-4)$.

 Finally, to see that the set $B(n,n-4)$ is linearly independent, we first observe (after Lemma $\ref{purelem}$), that since $F(\mathsf{E})(n,n-4) \cong S_{n-4}\times \mathsf{T}_{\ast, \xi}(n,n-4)$, it would be enough to show that the set $
	\left\{\xi_{k^\prime l^\prime}\cdot\xi_{ij} - \xi_{i^\prime j^\prime}\cdot\xi_{k l}   \right\} $ is linearly independent in $\op{T}_{\ast, \xi}(n,n-4):= \text{span}\{\mathsf{T}_{\ast, \xi}(n,n-4)\}$.  But here, each tree only appears in a single relation from which linear independence follows.  To see this note that the indices of any two trees appearing in such an expression are related by the involution $\iota$ of subsection $\ref{2sec}$, namely $\iota(i,j,k^\prime,l^\prime) = (k,l,i^\prime, j^\prime)$, and hence coincide after passage to $\mathsf{U}_n$.

{\bf Statement $(2)$: }
The relation defining $\op{R}(n,m;n+m-2)$ is in the putative basis, so if the span of the basis, call it $B=B(n,m;n+m-2)$, is closed under symmetric group actions, it will imply $\op{R}(n,m;n+m-2) \subset span\{B\}.$

Now $B$ is closed under the left $S_{n+m-2}$ action by definition and the $S_2$ action simply switches the two subsets defining $B$.  So it suffices to check that each subset is closed under the right $S_n\times S_m$ action, and for this it would be enough to rewrite an expression of the form
$$(\xi_{i,j} \cdot \ast_{n,m} - \ast_{n-2,m}\cdot (\xi_{i,j}\tensor id) )\cdot (\tau \times id)$$
as an element of $B$.  For this, one can directly verify that if $\rho^n_{i,j}\tau = \sigma\rho^n_{k,l}$
then
$\rho^{n+m-2}_{i,j}(\tau,id) = (\sigma, id)\rho^{n+m-2}_{k,l}$, by direct inspection.

Conversely, we show each element of $B$ is in $\op{R}(n,m;n+m-2)$ and conclude $span\{B\}$ is in $\op{R}(n,m;n+m-2)$.  Since $\op{R}$ is closed under the left symmetric group action it is enough to consider basis elements of the form 
$\xi_{k+n,l+n} \cdot \ast_{n,m} - \ast_{n-2,m}\cdot (id\tensor\xi_{k,l})$
	which by definition is
	$$\xi_{n+m-1,n+m}\cdot\rho^{n+m}_{k+n,l+n} \cdot \ast_{n,m} - \ast_{n-2,m}\cdot ( id\tensor \xi_{m-1,m}\cdot\rho^m_{k,l}).$$
To show this element is in $\op{R}(n,m;n+m-2)$ it suffices to observe that $\rho^{n+m}_{k+n,l+n}=id_n\times \rho^m_{k,l}\in S_{n}\times S_m\subset S_{n+m}$.  Thus we conclude $\op{R}(n,m;n+m-2)= span\{B\}.$ 

 Finally, to show that $B$ is linearly independent, we again use Lemma $\ref{purelem}$ to reduce to the case of pure trees, and then observe that each term appearing in such an element of $B$ appears in no other.

{\bf Statement $(3)$:}  This statement is a bit easier.  Let $\vec{v}=(n_1,n_2,n_3; n_1+n_2+n_3)$.  Since every putative basis element is just a permutation of an element in $\op{R}$, it must be the case that $span\{B(\vec{v})\}=\op{R}(\vec{v})$.  Linear independence can again be established by applying Lemma $\ref{purelem}$, after which one is reduced to considering the vector space $\op{T}_{\ast,\xi}(\vec{v})$ which is three dimensional with basis corresponding to the three non-planar binary trees on three leaves.  
The relations are 2 dimensional, and the quotient is one dimensional (a corolla).  Since two such relations (corresponding to any fixed $\sigma$) are not scalar multiples of each other, linear independence follows. \end{proof}

\subsection{The underlying operad in sets, $\mathsf{SM}$.}

The linear operad $\op{SM}$ is the span of an operad in sets.  Specifically, define an equivalence relation $\sim$ on each set $F(\mathsf{E})(\vec{v})$ by declaring two elements to be equivalent iff their images coincide under the composite of
\begin{equation}\label{sm}
F(\mathsf{E})(\vec{v})\hookrightarrow F(\op{E})(\vec{v}) \twoheadrightarrow \op{SM}(\vec{v}).
\end{equation}
Define $\mathsf{SM}(\vec{v}) $ to be the set of equivalence classes $F(\mathsf{E})(\vec{v})/\sim$.
\begin{lemma}\label{setslem}  The sets $\mathsf{SM}(\vec{v})$ inherit the structure of a $\mathbb{V}$-colored operad for which $\text{span}(\mathsf{SM})= \op{SM}$. 
\end{lemma}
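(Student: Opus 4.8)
The plan is to exploit the fact that $\op{SM}$ is a \emph{binomial} (monomial) quotient of the free operad: every relation is a difference of two pure trees, so passing to $\op{SM}$ merely identifies pairs of set-level generators rather than imposing genuine linear combinations. Concretely, Lemma \ref{purelem} identifies $F(\mathsf{E})(\vec{v})$ with a basis of $F(\op{E})(\vec{v})$, and I would show that the ideal $\langle\op{R}\rangle$ is spanned by differences $m-m'$ of such basis elements. Once this is established, the lemma becomes a formal consequence of the linear algebra of such binomial subspaces.

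First I would record that $\op{R}$ itself is spanned by binomials. By Proposition \ref{basis}, each listed basis vector of $\op{R}(\vec{v})$ has the shape $\sigma\cdot(m_1-m_2)$ with $m_1,m_2$ distinct pure trees; since the bijection of Lemma \ref{purelem} is $S_{n_0}$-equivariant, $\sigma\cdot m_1$ and $\sigma\cdot m_2$ are again set-level generators, so each such vector is a genuine binomial. Next I would propagate this property to the whole ideal. By definition $\langle\op{R}\rangle$ is generated by the structure maps $b\circ_i a$ and $a\circ_j b$ with $b\in\op{R}$; since operad composition is bilinear and $F(\op{E})=\text{span}(F(\mathsf{E}))$ is the span of the free operad in sets, it suffices to compose with set-level $a$. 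Grafting a set generator onto a set generator yields a single set generator (the composite pure tree, normalized via Lemma \ref{purelem}), so $a\circ_i(m_1-m_2)=(a\circ_i m_1)-(a\circ_i m_2)$ is again a binomial, and likewise on the other side. Hence $\langle\op{R}\rangle$ lies in the span of binomials $m-m'$.

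With the binomial description in hand, let $\approx$ be the equivalence relation on $F(\mathsf{E})(\vec{v})$ generated by the pairs $(m,m')$ occurring in these binomials. A standard argument shows that the indicator map $F(\op{E})(\vec{v})\to\text{span}(F(\mathsf{E})(\vec{v})/\approx)$ sending each generator to its $\approx$-class has kernel exactly $\langle\op{R}\rangle$: the kernel visibly contains every binomial, and a vector $\sum c_m m$ in the kernel satisfies $\sum_{m\in C}c_m=0$ on each class $C$, whence $\sum_{m\in C} c_m m=\sum_{m\in C}c_m(m-m_C)\in\langle\op{R}\rangle$ (as $m\approx m_C$). Consequently $\op{SM}(\vec{v})=F(\op{E})(\vec{v})/\langle\op{R}\rangle$ has the set of $\approx$-classes as a basis, and the defining relation $\sim$ (equality of images in $\op{SM}(\vec{v})$) coincides with $\approx$. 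Thus $\mathsf{SM}(\vec{v})=F(\mathsf{E})(\vec{v})/\sim$ maps bijectively onto a basis of $\op{SM}(\vec{v})$, which is precisely the assertion $\text{span}(\mathsf{SM})=\op{SM}$ at the level of $\mathbb{V}$-colored sequences.

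Finally I would upgrade this to an isomorphism of operads. Because the quotient map $\pi\colon F(\op{E})\to\op{SM}$ is a morphism of operads, $\sim$ is a congruence: $m_1\sim m_1'$ and $m_2\sim m_2'$ force $\pi(m_1\circ_i m_2)=\pi(m_1'\circ_i m_2')$, so $m_1\circ_i m_2\sim m_1'\circ_i m_2'$, and the grafting composition of the free set operad descends to $\mathsf{SM}$. Since $\pi(m_1\circ_i m_2)$ is the basis vector indexed by the set-level composite, the composition on $\op{SM}$ restricts to the basis $\mathsf{SM}$, and taking spans recovers $\op{SM}$ as an operad. I expect the only real work to be the propagation step of the second paragraph: one must check that normalizing an arbitrary composite of pure trees to its distinguished representative (Lemma \ref{purelem}) records a permutation on the root and never creates a sum of distinct pure trees, so that the binomial bookkeeping is preserved. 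This monomiality is precisely the feature that makes $\op{SM}$ the span of a set operad, and is the crux of the argument.
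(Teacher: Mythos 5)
Your proposal is correct and follows essentially the same route as the paper: both arguments hinge on Proposition \ref{basis} showing that the relations $\op{R}$ are differences $x-y$ of set-level elements of $F(\mathsf{E})$, so that the ideal $\langle\op{R}\rangle$ lies in the kernel of $\text{span}(F(\mathsf{E})\twoheadrightarrow\mathsf{SM})$, which yields injectivity of $\text{span}(\mathsf{SM})\to\op{SM}$, while surjectivity and the descent of the operad structure are formal. The paper streamlines your explicit binomial bookkeeping by noting that this kernel is automatically an operadic ideal (being the kernel of the span of an operad map), but the substance is identical.
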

\begin{proof} 
	Since the maps in Equation $\ref{sm}$ are equivariant, the symmetric group actions descend to the quotient making $\mathsf{SM}$ a $\mathbb{V}$-colored sequence.  The operad structure on $\mathsf{SM}$ is defined on representatives by pulling back to $F(\mathsf{E})$.  In particular, it is the unique structure for which $F(\mathsf{E})\to \mathsf{SM}$ is an operad map.	Taking the span of this map, we realize both  $\text{span}(\mathsf{SM})$ and $\op{SM}$ as equivalence classes of elements of
	$\text{span}(F(\mathsf{E})) = F(\op{E})$.  Equivalent elements in the former relation are also equivalent in the latter, so there is canonical map $\text{span}(\mathsf{SM})\to\op{SM}$ each of whose constituents is surjective.

The above paragraph could be applied to any quadratic operad defined by generators forming a $\mathbb{V}$-colored sequence in sets.  The fact that the above map is injective is a further condition which depends on the form of the relations.  Specifically, we apply Proposition $\ref{basis}$ to show that each $\op{R}(\vec{v})$ is contained within the kernel of the map $\text{span}(F(\mathsf{E})\twoheadrightarrow\mathsf{SM})$, since each element of the given bases for $\op{R}$ is of the form $x-y$ for some $x,y \in F(\mathsf{E})$.  It follows that the ideal generated by $\op{R}$ is also in the kernel, hence the induced map $\text{span}(\mathsf{SM})\to \op{SM}$ is also injective. \end{proof}

\subsection{Pure relations}  

The previous lemma confirms that the equivalence relation on $F(\mathsf{E})$ whose equivalence classes are $\mathsf{SM}$ is induced by the equivalences corresponding to the basis elements of Proposition $\ref{basis}$ along with operadic composition in $F(\mathsf{E})$.  To conclude this section let us unpack this statement to give a family of equivalences which generate this equivalence relation for each set $F(\mathsf{E})(\vec{v})$.

Using Lemma $\ref{purelem}$, we will write these equivalences as  $(\sigma,\mathsf{t})\sim (\sigma,\mathsf{t}_e)$ where $\mathsf{t}_e$ is formed from the pure tree $\mathsf{t}$ by applying the relevant relation at the edge $e$.  To make this explicit, fix an edge $e$ of a pure tree $\mathsf{t}$.  Let $u$ denote the vertex below $e$ and let $w$ denote the vertex above $e$. We consider three cases.

\begin{enumerate}
	\item Case $|u|=|w|=1$. 
	In this case the vertex $w$ is labeled by some $\xi_{i,j}$ and the vertex $u$ is labeled by some $\xi_{k^\prime, l^\prime}$.  Define $\mathsf{t_e}$ to be the same as $\mathsf{t}$, except $\xi_{k,l}$ replaces $\xi_{i,j}$ and $\xi_{i^\prime,j^\prime}$ replaces $\xi_{k^\prime,l^\prime}$, where the notation is as in Lemma $\ref{rholem}$.

	\item Case: $|u|=2$ and $|w|=1$.   In this case the vertex $u$ is labeled by some $[\ast_{n,m}]$, where branch $n$ is less than branch $m$ (in the terminology of subsection $\ref{treesec}$) and the vertex $v$ is labeled by some $\xi_{i,j}$.  We first form the tree underlying $\mathsf{t}_e$ by detaching the branch at $u$ not containing $w$, and reattaching it at $w$.  In so doing, the tree underlying $\mathsf{t}_e$ has a binary vertex (formerly $v$) above a unary vertex (formerly $u$).  We then relabel these vertices as follows.  There are two subcases depending on if $w$ was on branch $n$ or $m$.  For the former, relabel the vertices 
	 $\xi_{i,j}$ below $\ast_{n+2,m}$.  For the latter, relabel the vertices $\xi_{k+n,l+n}$ below $\ast_{n,m+2}$. 
	
	\item $|u|=|w|=2$.  
	In this case $w$ is labeled with some $[\ast_{q,r}]$ and $u$ is labeled with either some $[\ast_{q+r,s}]$ or $[\ast_{p,q+r}]$.  In either case, the three incoming branches are ordered (alphabetically) and we form the tree $\mathsf{t}_e$ by detaching the middle branch in this order from $w$ and reattaching it to $u$, while detaching the root branch from $u$ and reattaching it to $w$, switching the directionality of $e$.  The new lower vertex is then labeled with $[\ast_{q,r+s}]$ or $[\ast_{p+q,r}]$ and the new upper vertex with $[\ast_{r,s}]$ or $[\ast_{p,q}]$ in the respective cases.
\end{enumerate}

Note that the case of $|u|=1$ and $|w|=2$ is handled above by switching the roles of $\mathsf{t}$ and $\mathsf{t_e}$.

\begin{lemma} \label{equiv} The equivalence relation generated by $(\sigma, \mathsf{t})\sim (\sigma, \mathsf{t}_e)$ over all $\mathsf{t}$ and edges $e\in \mathsf{t}$ coincides with the equivalence relation defining $\mathsf{SM}$ above.
\end{lemma}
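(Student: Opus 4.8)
The statement asserts that two equivalence relations on each set $F(\mathsf{E})(\vec{v})$ agree. One relation defines $\mathsf{SM}$ (from Lemma~\ref{setslem}) and is generated by the basis vectors of Proposition~\ref{basis} together with closure under operadic composition in $F(\mathsf{E})$; the other is the combinatorially explicit relation generated by the local moves $(\sigma,\mathsf{t})\sim(\sigma,\mathsf{t}_e)$ described in the three cases above. The plan is to show the two relations are generated by the same set of elementary equivalences, so it suffices to match generators on both sides and verify neither side generates more than the other.

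**Key steps.** First I would recall, via Lemma~\ref{setslem}, that the $\mathsf{SM}$-relation is the equivalence relation whose defining equivalences are exactly $x\sim y$ whenever $x-y$ is a basis element of $\op{R}(\vec{v})$ from Proposition~\ref{basis}, closed under operadic composition (that is, under grafting any such local relation into a larger pure tree and under the left $S_{n_0}$-action). Second, I would check that each elementary move $(\sigma,\mathsf{t})\sim(\sigma,\mathsf{t}_e)$ is an $\mathsf{SM}$-equivalence: this is a case check against the three families of basis elements. In Case $|u|=|w|=1$, the move $\xi_{k',l'}\xi_{i,j}\mapsto\xi_{i',j'}\xi_{k,l}$ is precisely the first relation of Proposition~\ref{basis}(1), using the identity of Lemma~\ref{rholem} and the involution $\iota$ of subsection~\ref{2sec}. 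In Case $|u|=2,|w|=1$, the branch-detaching move reproduces the two families of Proposition~\ref{basis}(2), matching the subcase ($w$ on branch $n$ versus branch $m$) to the two generators $\xi_{i,j}\ast_{n,m}-\ast_{n-2,m}(\xi_{i,j}\otimes id)$ and $\xi_{k+n,l+n}\ast_{n,m}-\ast_{n,m-2}(id\otimes\xi_{k,l})$. In Case $|u|=|w|=2$, the branch-swapping move is the associativity relation of Proposition~\ref{basis}(3), with the two orderings corresponding to the two generators (one via the $(23)$ relabeling). Third, I would argue the converse containment: every defining $\mathsf{SM}$-equivalence is generated by the local moves. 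Since the $\mathsf{SM}$-relation is generated by grafting each basis element of Proposition~\ref{basis} into an arbitrary pure tree, and each such grafted basis element is, after pushing permutations to the root via Lemma~\ref{purelem}, exactly a single local move $(\sigma,\mathsf{t})\sim(\sigma,\mathsf{t}_e)$ at the relevant edge $e$, the two generating sets coincide.

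**Main obstacle.** The delicate point is Step~2 in Case $|u|=2,|w|=1$ and the bookkeeping of leaf labels and permutations throughout. When a self-gluing sits above a merger and one detaches and reattaches a branch, the reindexing of the surviving self-gluing indices (the shift $\xi_{k,l}\mapsto\xi_{k+n,l+n}$, and the change of arity $\ast_{n,m}\mapsto\ast_{n+2,m}$ or $\ast_{n,m+2}$) must be shown to agree on the nose with the reindexing built into the Proposition~\ref{basis}(2) generators, and simultaneously the pushing-down-of-permutations normalization from Lemma~\ref{purelem} must be compatible with this local move for it to descend to a well-defined equivalence of pure trees. I expect this compatibility to be the crux: one must confirm that performing a local move and then renormalizing to a pure representative gives the same pure tree as renormalizing first and reading off the induced relation, which amounts to checking that the branch reordering (least-leaf-label convention of subsection~\ref{treesec}) interacts correctly with the merger symmetry. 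The remaining cases are comparatively routine verifications.

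\begin{proof}
The relation defining $\mathsf{SM}$ is, by Lemma~\ref{setslem} together with Proposition~\ref{basis}, generated by the equivalences $x\sim y$ for $x-y$ a basis element of $\op{R}(\vec{v})$, closed under operadic composition in $F(\mathsf{E})$ and under the left symmetric group action. Using Lemma~\ref{purelem} to write elements of $F(\mathsf{E})(\vec{v})$ in the normal form $(\sigma,\mathsf{t})$ with $\mathsf{t}$ pure and $\sigma\in S_{n_0}$, it suffices to match the generators of the two relations.

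We first check each local move $(\sigma,\mathsf{t})\sim(\sigma,\mathsf{t}_e)$ is an $\mathsf{SM}$-equivalence. In Case $|u|=|w|=1$, the move replaces $\xi_{k',l'}\xi_{i,j}$ by $\xi_{i',j'}\xi_{k,l}$; by Lemma~\ref{rholem} and the computation in the proof of Proposition~\ref{basis}(1), this difference lies in $\op{R}(n,n-4)$, and the two index-tuples coincide after passing to $\mathsf{U}_n$ via the involution $\iota$ of subsection~\ref{2sec}. In Case $|u|=2,|w|=1$, the branch-detaching move yields, according to whether $w$ lay on branch $n$ or $m$, exactly the two generators
$$
\xi_{i,j}\cdot\ast_{n,m}-\ast_{n-2,m}\cdot(\xi_{i,j}\tensor id)
\quad\text{and}\quad
\xi_{k+n,l+n}\cdot\ast_{n,m}-\ast_{n,m-2}\cdot(id\tensor\xi_{k,l})
$$
of Proposition~\ref{basis}(2), where the identity $\rho^{n+m}_{k+n,l+n}=id_n\times\rho^m_{k,l}$ established there guarantees the reindexing agrees. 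In Case $|u|=|w|=2$, the branch-swapping move produces the associativity relation of Proposition~\ref{basis}(3), the two branch orderings corresponding to the two generators, one via the $(23)$ relabeling. Hence every local move is an $\mathsf{SM}$-equivalence.

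Conversely, the $\mathsf{SM}$-relation is generated by grafting each basis element of Proposition~\ref{basis} into an arbitrary pure tree and acting by $S_{n_0}$. After normalizing such a grafted relation to the form $(\sigma,\mathsf{t})$ of Lemma~\ref{purelem}, the two pure trees appearing differ precisely by one of the local moves at the edge $e$ where the basis relation was applied, as the case analysis above shows the correspondence is a bijection between the three basis families and the three cases. Therefore the two generating sets coincide, and the equivalence relations they generate are equal.
\end{proof}
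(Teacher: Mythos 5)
Your proposal is correct and follows essentially the same route as the paper: both containments are established by matching the three types of local moves $(\sigma,\mathsf{t})\sim(\sigma,\mathsf{t}_e)$ with the three families of basis relations of Proposition \ref{basis}. The only cosmetic difference is in how equality is concluded: the paper passes from the resulting equality of ideals to equality of equivalence relations by identifying the number of equivalence classes with the dimension of the quotient of $F(\op{E})(\vec{v})$, whereas you match the generating pairs of the two relations directly after normalizing via Lemma \ref{purelem}; both rest on the same underlying verifications.
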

\begin{proof} The equivalence relation generated by $(\sigma, \mathsf{t})\sim (\sigma, \mathsf{t}_e)$ is clearly coarser than the one defining $\mathsf{SM}$.  To show that it is also finer, and hence the same, one easily shows
$$	
	\op{R}(\vec{v})\subset \langle (\sigma, \mathsf{t}) - (\sigma, \mathsf{t}_e) \ | \ \sigma, \mathsf{t},e  \rangle(\vec{v}) \subset \langle\op{R}\rangle(\vec{v}) \subset F(\op{E})(\vec{v})
$$	
from which it follows that $\langle (\sigma, \mathsf{t}) - (\sigma, \mathsf{t}_e) \ | \ \sigma, \mathsf{t},e  \rangle = \langle\op{R}\rangle $.  This completes the proof since the number of equivalences classes for the respective equivalence relations is manifest as the dimension of the quotient space of $F(\op{E})(\vec{v})$ by the respective ideals, hence coincides. \end{proof}

\section{$\op{S}\circ\op{M} = \op{SM}$}

The operad $\op{SM}$ has a weight grading which counts the number of generators.  Moreover, it has a bigrading which counts the number of generators of each type.  This bigrading is induced by a corresponding partition of the operad $\mathsf{SM}$ as we now indicate.

Let $\mathsf{E}$ be as in Equation $\ref{generators}$.  Define $F(\mathsf{E})^{s,m}$ to be the sub-$\mathbb{V}$ colored sequence of $F(\mathsf{E})$ consisting of those trees having $s$ unary vertices and $m$ binary vertices.  The bi-index $(s,m)$ is preserved by the equivalence relation of Lemma $\ref{equiv}$ hence descends to $\mathsf{SM}$.  The bi-index is bi-additive under the operadic composition maps
hence there are suboperads $\mathsf{S} := \mathsf{SM}^{-,0}$ and $\mathsf{M} := \mathsf{SM}^{0,-}$ consisting of those elements represented by compositions of only self-gluings (resp.\ only mergers).

Define $\op{S}$ and $\op{M}$ to be the span of $\mathsf{S}$ and $\mathsf{M}$ respectively.

\subsection{$\op{S}$ and $\op{M}$ are Koszul}

Both $\op{S}$ and $\op{M}$ are weight graded quadratic operads.  The generators of these operads satisfy
$$\mathsf{E_S}(n,n-2) = \{\sigma\xi_{i,j} \ | \ \sigma \in S_{n-2} \text{ and } 1\leq i < j \leq n \} \text{ and } \mathsf{E_M}(n,m;n+m) = S_{n+m}\times \ast_{n,m}$$
and are $0$ elsewhere.  The non-zero relations of $\op{S}$ (resp. $\op{M}$) are those indicated in and above Lemma $\ref{equiv}$ of type (1) (resp.\ type (3)).

\begin{lemma}  $\op{S}$ is Koszul.	
\end{lemma}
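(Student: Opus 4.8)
The plan is to realize $\op{S}$ as one of the groupoid colored operads whose Koszulity is established in \cite{KFC}, so that the Lemma becomes an application of that result together with the observation, recorded in Lemma \ref{dllem}, that the relevant bar/cobar and distributive-law machinery carries over verbatim to the groupoid colored setting. First I would pin down the structure of $\op{S}$ concretely. Its only generators are the unary self-gluings $\xi_{i,j}$, so every $\mathsf{E_S}$-labeled tree is a linear chain of univalent vertices, and by Lemma \ref{purelem} a weight $k$ element of $\op{S}(n;n-2k)$ is a permutation $\sigma$ together with a pure chain of $k$ self-gluings. The only relations are those of type $(1)$ in Lemma \ref{equiv} (equivalently axiom (R1)), which, via the reindexing of Lemma \ref{rholem} and the involution $\iota$ of subsection \ref{2sec}, say precisely that the order in which two self-gluings are performed is immaterial once one accounts for the induced relabeling. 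Thus $\op{S}$ is the quadratic operad of commuting self-gluings, and $\op{S}(n;n-2k)$ is freely spanned, after the left $S_{n-2k}$-action, by the partial matchings of size $k$ on $\{1\cdc n\}$.

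This is exactly the groupoid colored encoding of the self-gluing part of the modular Feynman category treated in \cite{WardMP,KFC}. Consequently I would invoke the main Koszulity theorem of \cite{KFC}: the fiber of the Koszul map of $\op{S}$ over a total self-gluing is the polytope obtained there as a blow-down of a permutahedron, whose reduced homology is concentrated so as to force the comparison map between the linear dual of the bar construction and the quadratic dual to be a quasi-isomorphism. In other words, the Koszul complex of $\op{S}$ is acyclic away from the diagonal weight, which is the notion of Koszulity adopted above. Because Lemma \ref{dllem} guarantees that bar/cobar, quadratic duality, and the polytope computation all lift from the uncolored to the groupoid colored setting without change, no new homological input is required. (Note this is the one regime where the \cite{KFC} polytope argument genuinely applies, in contrast to $\op{SM}$ itself, whose fiber need not be a polytope.)

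The step I expect to be the main obstacle is matching the groupoid colored bookkeeping against the uncolored combinatorics of \cite{KFC}. Concretely, the reindexing permutations $\rho_{i,j}$ and the involution $\iota$ of equation \ref{iota} must be checked to act compatibly with the decomposition of the Koszul complex into $S_{n-2k}$-orbits, so that each orbit contributes exactly one copy of the \cite{KFC} polytopal fiber and the symmetric group actions on the colors neither create nor destroy homology. This is precisely the verification that Proposition \ref{basis} and Lemma \ref{equiv} were arranged to make routine: the basis of the type $(1)$ relations is indexed by $\mathsf{U}_n$, and the two terms of each relation are interchanged by $\iota$, so the commutativity rewriting is confluent and the orbit decomposition is clean. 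Granting this organizational check, the result follows and $\op{S}$ is Koszul.
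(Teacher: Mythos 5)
Your proposal is correct and takes essentially the same route as the paper: the paper's entire proof is the one-line observation that the Feynman category corresponding to $\op{S}$ is cubical, hence Koszul by the main theorem of \cite{KFC}. Your unpacking of $\op{S}$ as the operad of commuting self-gluings and your description of the polytopal fiber are accurate elaborations of that same citation rather than a different argument, so no further comparison is needed.
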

\begin{proof}  This follows from the main theorem of \cite{KFC}, as the Feynman category corresponding to $\op{S}$ is cubical, hence Koszul.  
\end{proof}

\begin{lemma}  $\op{M}$ is Koszul.
\end{lemma}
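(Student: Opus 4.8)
The plan is to recognize $\op{M}$ as the $\mathbb{V}$-colored incarnation of the non-unital commutative operad and to transport the classical proof of its Koszulity. By construction the merger $\ast_{n,m}$ is the unique binary generator in bidegree $(n,m)$; it is already commutative at the level of generators, since the $S_2$-equivariance built into $\mathsf{E}$ identifies $\ast_{n,m}$ with $\sigma_{n,m}\cdot\ast_{m,n}$; and by Proposition \ref{basis}(3) the only relations are associativity $\ast_{n+m,l}\circ_1\ast_{n,m}=\ast_{n,m+l}\circ_2\ast_{m,l}$ together with its symmetric-group orbit. Thus $\op{M}$ is nothing but the commutative operad $\mathsf{Com}$, coloured by $\N$ with colours composing additively along mergers. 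Crucially, this identification does not use the factorization $\op{SM}=\op{S}\circ\op{M}$, so no circularity is incurred.

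First I would make the identification precise at the level of the underlying set operad $\mathsf{M}$. Using Lemma \ref{purelem} together with the associativity rewrites of Lemma \ref{equiv}(3), every pure merger tree of type $(n_1\cdc n_r;n_0)$ is equivalent to a single normal form --- the left comb whose branches are attached in increasing order of least leaf label --- so that $\mathsf{M}(\vec v)$ is empty unless $n_0=\sum_i n_i$ and is a torsor under $S_{n_0}$ otherwise. This is the colored analogue of the statement that $\mathsf{Com}(r)$ is a point, and it exhibits the left-comb monomials as a PBW basis. Koszulity then follows from the groupoid-colored rewriting/PBW criterion afforded by the lift of \cite[Section 8.6]{LV} to our setting (Lemma \ref{dllem}): one orients the associativity relation toward the left comb and checks confluence of the resulting quadratic rewriting system, the only critical pairs arising at weight three among triples of mergers, where confluence is the familiar associativity identity. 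Equivalently, one may observe that the weight-graded bar complex of $\op{M}$ splits, compatibly with the colouring, into copies of the classical commutative Koszul complex, whose acyclicity --- the top-degree concentration of partition-poset homology --- is classical.

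The main obstacle is purely bookkeeping: verifying that the colours and the residual symmetric-group equivariance do not obstruct confluence. This is mild, because the colour of every edge of a merger tree is determined additively by the leaf colours lying above it, so rewriting never alters colours, while commutativity is absorbed once and for all by the increasing-least-leaf-label convention on branches (subsection \ref{treesec}). Consequently the weight-three confluence check reduces verbatim to the uncoloured associativity computation, and the classical Koszulity of $\mathsf{Com}$ transfers directly to give that $\op{M}$ is Koszul.
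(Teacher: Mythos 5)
Your parenthetical ``equivalently'' remark is in fact the paper's entire proof, so the core of your proposal is correct and essentially coincides with it: by Lemma $\ref{purelem}$, applied in the special case of trees with only binary vertices, the bar complex splits as $\mathsf{B}(\op{M})(\vec{v})\cong k[S_{v_0}]\tensor B(Com(n))$ when $v_1+\dots+v_n=v_0$ (and vanishes otherwise), and one then quotes the classical Koszulity of $Com$. Your set-level normal-form analysis (left combs attached in increasing order of least leaf label, exhibiting $\mathsf{M}(\vec v)$ as an $S_{n_0}$-torsor) is the underlying reason this splitting exists, and it is fine. Where you depart from the paper, however, there is a gap as written: the ``groupoid-colored rewriting/PBW criterion'' you invoke is not what Lemma $\ref{dllem}$ provides. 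Lemma $\ref{dllem}$ and the Diamond Lemma $\ref{diamondlem}$ generalize only the theory of distributive laws: they take \emph{two} quadratic operads already known to be Koszul, together with a rewriting rule between them, and conclude Koszulity of the composite. They supply no confluence or Gr\"obner-basis test for a single quadratic operad such as $\op{M}$; on the contrary, Koszulity of $\op{M}$ is one of the hypotheses the paper must verify \emph{before} it can run that machinery on $\op{SM}$. To make your primary route rigorous you would first have to lift the rewriting-method/PBW--Gr\"obner apparatus of \cite{LV} (or of \cite{KurtS}) to the groupoid-colored setting, which this paper deliberately does not undertake. So the bar-complex splitting should be promoted from an aside to the actual proof, with the confluence discussion retained only as motivation.
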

\begin{proof}
	Consider $\mathsf{B}(\op{M})(\vec{v})$.  Assume $v_1+...+v_n =v_0$, since otherwise this chain complex would be zero. 
	In this case, there is an isomorphism of chain complexes
	$$\mathsf{B}(\op{M})(\vec{v})\cong k[S_{v_0}]\tensor  B(Com(n)),$$ where $B$ denotes the uncolored operadic bar construction.  This follows by applying Lemma $\ref{purelem}$ in the special case that the tree has only binary vertices.  	To complete the proof, appeal to the Koszulity of Com. \end{proof}

\subsection{A rewriting rule}
We define a morphism of $\mathbb{V}$-colored sequences
\begin{equation} \label{rewrite}
\lambda\colon \mathsf{E_M}\circ_{(1)} \mathsf{E_S} \to \mathsf{E_S}\circ_{(1)}\mathsf{E_M}
\end{equation}
where $\circ_{(1)}$ means a direct sum over all such trees with one edge. 
Given necessary compatibility of the $S_2$ action with the $\circ_i$ maps, it's sufficient to define maps
$$
\mathsf{E_M}(n-2,m;n+m-2)\circ_1 \mathsf{E_S}(n,n-2)
\stackrel{\lambda_{n,m}}\longrightarrow \mathsf{E_S}(n+m;n+m-2)\circ_1 \mathsf{E_M}(n,m;n+m)
$$
We define $\lambda(\ast_{n-2,m}\circ_1\xi_{i,j}) = \xi_{i,j}\circ_1\ast_{n,m}$ and extend $S_{n+m-2}$ equivariantly. From the $S_2$ equivariance one can derive the formula
$\lambda(\ast_{n,m-2}\circ_2\xi_{i,j}) = \xi_{i+n,j+n}\circ_1\ast_{n,m}$.

\begin{lemma}  $\op{SM}=\op{S}\vee_\lambda\op{M}$
\end{lemma}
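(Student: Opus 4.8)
The plan is to observe that both $\op{SM}$ and $\op{S}\vee_\lambda\op{M}$ are, by construction, quadratic $\mathbb{V}$-colored operads of the form $F(-)/\langle-\rangle$, so it suffices to check that their generating $\mathbb{V}$-colored sequences and their weight-two relation subsequences agree. First I would record the splitting of generators. The sequence $\mathsf{E}$ of Equation \ref{generators} decomposes by arity: its unary part is $\mathsf{E_S}$ and its binary part is $\mathsf{E_M}$, so that $\op{E} = V\oplus W$ with $V := \text{span}(\mathsf{E_S})$ the generators of $\op{S}$ and $W := \text{span}(\mathsf{E_M})$ the generators of $\op{M}$. This is exactly the input datum of the construction $\op{S}\vee_\lambda\op{M} = (V\oplus W,\ R\oplus D_\lambda\oplus S)$ of Section \ref{secrev}, where $R$ denotes the type-(1) relations of $\op{S}$ and $S$ the type-(3) relations of $\op{M}$.

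Next I would match the relations against the four-term decomposition
\[
F(V\oplus W)^{(2)} = F(V)^{(2)}\oplus F(W)^{(2)}\oplus V\circ_{(1)}W\oplus W\circ_{(1)}V
\]
used to define $\vee_\lambda$. Counting unary and binary vertices (the bigrading $(s,m)$ with $s+m=2$ introduced above) sorts the three families of relations of Proposition \ref{basis} into these summands: the type-(1) basis, with targets $(n,n-4)$, lies in $F(V)^{(2)}$ and is precisely the relation sequence $R$ of $\op{S}$; the type-(3) basis, with targets $(n,m,l;n+m+l)$, lies in $F(W)^{(2)}$ and is the relation sequence $S$ of $\op{M}$; and the type-(2) basis, with targets $(n,m;n+m-2)$, lies in the mixed summand $V\circ_{(1)}W\oplus W\circ_{(1)}V$, since each such relation has one term of the form $\xi\cdot\ast$ (in $V\circ_{(1)}W$) and one of the form $\ast\cdot(\xi\tensor id)$ (in $W\circ_{(1)}V$).

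The crux, and the step I expect to demand the most care, is identifying the type-(2) family with the graph $D_\lambda = \{\mathsf{t}-\lambda(\mathsf{t}) : \mathsf{t}\in W\circ_{(1)}V\}$. I would show that each basis vector of Proposition \ref{basis}(2) equals, up to sign, some $\mathsf{t}-\lambda(\mathsf{t})$. The first subset $\xi_{i,j}\cdot\ast_{n,m} - \ast_{n-2,m}\cdot(\xi_{i,j}\tensor id)$ is exactly $-(\mathsf{t}-\lambda(\mathsf{t}))$ for $\mathsf{t} = \ast_{n-2,m}\circ_1\xi_{i,j}$, directly from the defining formula $\lambda(\ast_{n-2,m}\circ_1\xi_{i,j}) = \xi_{i,j}\circ_1\ast_{n,m}$; the second subset $\xi_{k+n,l+n}\cdot\ast_{n,m} - \ast_{n,m-2}\cdot(id\tensor\xi_{k,l})$ matches the $S_2$-derived clause $\lambda(\ast_{n,m-2}\circ_2\xi_{k,l}) = \xi_{k+n,l+n}\circ_1\ast_{n,m}$. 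The reindexing $k\mapsto k+n$, $l\mapsto l+n$ appearing there is precisely the index shift built into this second clause of $\lambda$; confirming that these indices agree is the one genuinely computational point, and I would handle it through Lemma \ref{rhocor}. Since, as $\sigma$ ranges over $S_{n+m-2}$ and the self-gluing is placed on the first or the second branch, these two subsets exhaust a basis of $W\circ_{(1)}V$ and $\lambda$ sends these generators to pairwise distinct elements of $V\circ_{(1)}W$, the spans of the type-(2) family and of $D_\lambda$ coincide as $\mathbb{V}$-colored sequences.

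Having matched the generators and all three relation families, I would conclude $\op{R} = R\oplus D_\lambda\oplus S$ as weight-two subsequences of $F(V\oplus W)$, whence $\langle\op{R}\rangle = \langle R\oplus D_\lambda\oplus S\rangle$ and the two presentations define the same quadratic $\mathbb{V}$-colored operad, i.e.\ $\op{SM} = \op{S}\vee_\lambda\op{M}$.
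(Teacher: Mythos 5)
Your proposal is correct and takes essentially the same route as the paper, whose entire proof is the one-line observation that ``by inspection (see Proposition \ref{basis}), the generators and relations of these quadratic operads coincide.'' Your write-up simply makes that inspection explicit: splitting $\op{E}=\op{E_S}\oplus\op{E_M}$, sorting the three relation families of Proposition \ref{basis} into the four-term decomposition of $F(V\oplus W)^{(2)}$, and matching the type-(2) basis (up to sign) with $D_\lambda$ via the two defining clauses of $\lambda$.
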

\begin{proof}  Simply by inspection (see Proposition $\ref{basis}$), the generators and relations of these quadratic operads coincide. \end{proof}

\subsection{$\op{SM}$ satisfies the diamond lemma}


\begin{theorem}\label{main}  The rewriting rule $\lambda$ (Equation $\ref{rewrite}$) satisfies the Diamond Lemma (Lemma $\ref{diamondlem}$).  In particular $\op{SM}$ is Koszul and $\op{SM} \cong \op{S}\circ\op{M}$.
\end{theorem}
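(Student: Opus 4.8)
The plan is to apply the Diamond Lemma (Lemma \ref{diamondlem}) to the pair of Koszul operads $\op{S} = (\mathsf{E_S}, \text{type (1) relations})$ and $\op{M} = (\mathsf{E_M}, \text{type (3) relations})$ together with the rewriting rule $\lambda$ of Equation \ref{rewrite}. By the preceding lemmas we already know $\op{S}$ and $\op{M}$ are Koszul and that $\op{SM} = \op{S}\vee_\lambda\op{M}$, so the only hypothesis left to verify is the injectivity of $p$ restricted to $\op{S}\circ\op{M}^{(3)}(\vec{v})$ for each $\vec{v}$. Once this is established, Lemma \ref{diamondlem} immediately gives both that $p$ is an isomorphism (so $\op{SM}\cong\op{S}\circ\op{M}$) and that $\op{SM}$ is Koszul, completing the theorem.

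First I would set up the combinatorial model. The elements of $\op{S}\circ\op{M}^{(3)}(\vec{v})$ are (spans of) weight-$3$ pure trees in which all self-gluing (unary) vertices sit \emph{below} all merger (binary) vertices; these are the normal forms singled out by the surjection $p$ in the discussion preceding Lemma \ref{diamondlem}. Injectivity of $p$ is equivalent to the statement that no two distinct such normal forms become identified in $\op{SM}$. Using Lemma \ref{setslem} and Lemma \ref{equiv}, it suffices to work in the set-level operad $\mathsf{SM}$ and to show that the confluence of the rewriting steps $(\sigma,\mathsf{t})\sim(\sigma,\mathsf{t}_e)$ holds on weight-$3$ trees: every weight-$3$ pure tree can be rewritten to a unique $\op{S}$-below-$\op{M}$ normal form. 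I would enumerate the weight-$3$ configurations by the multiset of vertex arities and the shape of the underlying tree, reducing to a finite list of local ``critical pairs'' to check.

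The key case analysis runs over the three types of adjacent-vertex rewrites catalogued just before Lemma \ref{equiv}: the self-gluing/self-gluing swap (type 1, governed by the involution $\iota$ and Lemma \ref{rholem}), the merger/self-gluing swap (type 2, which is exactly $\lambda$ and its $S_2$-conjugate), and the merger/merger associativity (type 3, governed by Com-associativity). A weight-$3$ tree involves two of these local moves, and confluence amounts to checking that the two ways of bringing the self-gluings below the mergers agree. The genuinely interesting critical pairs are those mixing a self-gluing with two mergers, or two self-gluings with one merger: here one must verify that the reindexing bookkeeping of the permutations $\rho_{i,j}$ — encoded in Lemma \ref{rholem}, Lemma \ref{rhocor}, and the commutation $p_n\circ\iota_m = \iota_{n+m}\circ p_n$ of Lemma \ref{plusn} — is compatible, i.e.\ that pushing a self-gluing past a merger on one branch and then past a second operation yields the same indices as performing the operations in the other order. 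I expect these index-tracking identities to follow from the order-preserving property of $\rho_{i,j}$ together with the shuffle-compatibility already recorded, so that each critical pair resolves to a common normal form.

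The main obstacle will be the confluence check for the mixed critical pair involving one self-gluing below a merger whose two input branches each carry further structure, since this is precisely where the $(1,2)$-type reindexing (the shift $\rho^{n+m}_{k+n,l+n} = id_n\times\rho^m_{k,l}$ used in the proof of Proposition \ref{basis}, statement (2)) interacts nontrivially with the associativity move on mergers. Concretely, one must confirm that distributing the self-gluing onto the correct branch and then re-associating the two mergers gives the same labeled normal form as re-associating first and then distributing — an equality of permutations in $S_{n+m+l-2}$ that I would verify by evaluating both sides on a general input using Lemma \ref{rhocor}. Because all of these are finite, explicit permutation computations on the finitely many weight-$3$ shapes, I would organize the verification as a tabulation of critical pairs rather than a single formula; the $S_2$-equivariance of $\lambda$ reduces the merger/self-gluing subcases roughly in half, and symmetry of the underlying trees further collapses the list, so that in the end only a handful of genuinely distinct identities require checking.
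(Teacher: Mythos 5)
Your reduction steps coincide with the paper's: apply Lemma \ref{diamondlem} to the Koszul operads $\op{S}$ and $\op{M}$, use $\op{SM}=\op{S}\vee_\lambda\op{M}$, pass to the set-level operads via Lemma \ref{setslem} and Lemma \ref{equiv}, and split the weight-$3$ injectivity check according to the bigrading, with only the mixed cases $(2,1)$ and $(1,2)$ requiring real work. Where you genuinely diverge is in how that injectivity is verified. You propose a rewriting-theoretic argument: orient the merger/self-gluing exchange, treat the type (1) and type (3) relations as unoriented equations, and establish uniqueness of $\op{S}$-below-$\op{M}$ normal forms by resolving critical pairs. The paper instead constructs, for each of the two nontrivial bigradings, an explicit invariant -- a ``shadow'' map $F(\mathsf{E})(\vec{v})^{(2,1)}\to S_{n_0}\times\mathsf{U}_{n_1+n_2}$, resp.\ $F(\mathsf{E})(\vec{v})^{(1,2)}\to S_{n_0}\times\mathsf{P}_{n_0+2}$, recording the root permutation together with suitably reindexed self-gluing labels -- then checks the shadow is unchanged by every single generating move $(\sigma,\mathsf{t})\sim(\sigma,\mathsf{t}_e)$ (using Lemma \ref{rholem}, Lemma \ref{plusn} and $\phi\circ\iota=\phi$), and finally observes that distinct elements of $\mathsf{S}\circ\mathsf{M}$ have distinct shadows. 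The invariant route is lighter: one verifies invariance move-by-move rather than joinability of overlapping pairs of moves, and it needs no termination argument or Newman-type lemma. Your route is viable, since all weight-$3$ configurations are finite and the reindexing identities you cite do resolve the critical pairs, but note two points of care it glosses over: because the type (1) and type (3) relations are unoriented, you are rewriting modulo an equivalence, so ``unique normal form'' must mean unique as an element of $\mathsf{S}\circ\mathsf{M}$ (i.e., up to the within-level relations), and the local analysis must include the coherence overlaps between $\lambda$-moves and those unoriented moves (confluence modulo an equivalence, not plain Newman), together with the easy observation that each $\lambda$-move strictly decreases the number of self-gluings lying above mergers, which gives termination. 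Your list of critical pairs does include these mixed overlaps, so the plan is sound; it simply invokes more machinery than the paper's direct separation-of-invariants argument for the same finite check.
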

\begin{proof}  To apply the Diamond Lemma we need to show that for each $\vec{v}$, the restriction of the natural surjection
	\begin{equation}\label{inj}
		\op{S}\circ\op{M}(\vec{v}) \to \op{S}\vee_\lambda\op{M}(\vec{v})	
	\end{equation}
 to total weight 3 is injective.  This map preserves the bigrading, so it is enough to check injectivity for each bi-index $(s,m)$ with $s+m=3$.  Note that a vector space $\op{S}\circ\op{M}^{(x,3-x)}(\vec{v})$ is not zero if and only if one of the following holds:
\begin{itemize}
	\item $(s,m)=(3,0)$ and $\vec{n}= (n;n-6)$,
	\item $(s,m)=(2,1)$ and $\vec{n}=(n,m; n+m-4)$,
	\item $(s,m)=(1,2)$ and $\vec{n}=(n,m,l;n+m+l-2)$, or
	\item $(s,m)=(0,3)$ and $\vec{n}= (n,m,l,p;n+m+l+p)$.
\end{itemize}

Next, observe that the map in Equation $\ref{inj}$ is in the image of span, viewed as a functor from the category of operads in sets. To see this, first recall that from Lemma $\ref{setslem}$ we have  $\op{SM}=\text{span}(\mathsf{SM})$.  Similarly for the set operads $\mathsf{S}$ and $\mathsf{M}$ whose spans are $\op{S}$ and $\op{M}$ respectively.  Since left adjoints preserves colimits, we have that $\op{S}\circ\op{M} \cong \text{span}(\mathsf{S}\circ\mathsf{M})$.  Finally we observe that the map in Equation $\ref{inj}$ is defined by taking trees to trees, so is defined at the level of bases and hence is in the image of span.

So, to finish the proof it remains to show that the induced map 
\begin{equation}\label{setin}
\mathsf{S}\circ\mathsf{M}^{(s,m)} \to \mathsf{SM}^{(s,m)}
\end{equation}
is injective when $s+m=3$ (i.e.\ in the four cases above).  Since the functor span preserves injections, this will finish the proof.  The cases $(s,m)=(3,0)$ and $(s,m)=(0,3)$ are immediate since the two sides coincide (up to composition with the unit).  So we focus on the remaining two cases.

{\bf Case: $(s,m)=(2,1)$}.  Fix $n_1$ and $n_2$ with $n_1+n_2\geq 4$, so that $\vec{v}=(n_1,n_2; n_0)$ where $n_0=n_1+n_2-4$.

Define a function (of sets)
$$
\text{sh}\colon F(\mathsf{E})(\vec{v})^{(2,1)} \to S_{n_0}\times \mathsf{U}_{n_1+n_2}
$$
called the shadow, as follows.  After Lemma $\ref{purelem}$ we denote elements in $F(\mathsf{E})(\vec{v})$ as pairs $(\sigma,\mathsf{t})$ where $\sigma \in S_{n_0}$.  The tree $\mathsf{t}$ is then a ``pure'' labeled tree, meaning that its lone binary vertex is labeled by some $\ast_{m_1,m_2}$ and its two unary vertices, call them $u$ and $w$, are labeled with some $\xi_{u_1,u_2}$ and $\xi_{w_1,w_2}$ respectively.  Without loss of generality, we assume that the vertex $u$ is not below the vertex $w$  and if the two are parallel (i.e.\ one on branch 1 and the other on branch 2) then $u$ is on 1 and $w$ is on 2. 

Given such a $\mathsf{t}$ with unary vertices $u$ and $w$, we define integers $\chi(u)$ and $\chi(w)$ as follows.  Define $\chi(u)=0$ unless $u$ is on branch 2, in which case $\chi(u)=n_1$  Define $\chi(w)=0$ unless $w$ in branch $2$, in which case we say $\chi(w) = n_1-2$ if $u$ is on branch $1$ and $\chi(w) = n_1$ if $u$ is not on branch $1$. In particular note $\chi(u)\leq \chi(w)$ by our convention.

  We then define the shadow by the formula $$\text{sh}((\sigma, \mathsf{t})) = (\sigma, \phi(u_1+\chi(u),u_2+\chi(u),w_1+\chi(w),w_2+\chi(w))),$$
  where $\phi$ is as in Equation $\ref{phidef}$.

Next, we show that the shadow lifts to a map
$$\text{sh}\colon \mathsf{SM}(\vec{v})^{(2,1)} \to S_{n_0}\times \mathsf{U}_{n_1+n_2}.$$
After Lemma $\ref{rholem}$ it suffices to check that $\text{sh}(\sigma, \mathsf{t}) \sim 
\text{sh}(\sigma, \mathsf{t}_e)$ for any edge $e$ in such a tree.  If the edge is adjacent to two unary vertices, we apply Lemma $\ref{plusn}$ to conclude
$$\text{sh}((\sigma, \mathsf{t}_e)) = (\sigma, \phi\circ\iota(u_1+\chi(u),u_2+\chi(u),w_1+\chi(w),w_2+\chi(w)))$$
and we appeal to the fact that $\phi\circ\iota = \phi$.  Note in this case that $\chi(u)=\chi(w)$, so the ambiguity of which is which in this latter expression is immaterial.

If the edge is adjacent to a binary and a unary vertex, then we may assume, without loss of generality that the unary vertex is above the binary in $\mathsf{t}$ (since their roles are reversed in $\mathsf{t}_e$).  

In this case $\text{sh}((\sigma, \mathsf{t}))=\text{sh}((\sigma, \mathsf{t}_e))$ by direct inspection.  In particular, if the unary vertex is on branch one, then the indices of the self-gluing won't change when passing from $\mathsf{t}$ to $\mathsf{t_e}$.  If the unary vertex is on branch two, then the indices of the self-gluing will increase when being exchanged with the unary vertex.  If the other unary vertex is on branch $1$ then the binary vertex would be labeled by $\ast_{n_1-2,n_2-2}$, hence the indices would increase by $n_1-2$.  Else, the binary vertex would be labeled by either $\ast_{n_1, n_2-2}$ or $\ast_{n_1, n_2-4}$, and so the indices of the unary vertex would increase by $n_1$.  In each case, the change of indices matches the relation (Lemma $\ref{rholem}$).

Finally, we observe that the elements of $\mathsf{S}\circ\mathsf{M}^{(2,1)}$ clearly map to elements with different shadows, hence this map is injective when restricted to the $(2,1)$ summand.

{\bf Case: $(s,m)=(1,2)$}. 

Let $\vec{v}=(n_1,n_2,n_3; n_0)$ where $n_0=n_1+n_2+n_3-2$.  Write  $\mathsf{P}_n$ for the set of pairs of distinct numbers between $1$ and $n$.  We again define a function (of sets)
$$
\text{sh}\colon F(\mathsf{E})(\vec{v})^{(1,2)} \to S_{n_0}\times \mathsf{P}_{n_0+2}
$$
called the shadow, as follows.  We denote an element in the source by a pair $(\sigma,\mathsf{t})$ where $\sigma \in S_{n_0}$ and $\mathsf{t}$ is a pure labeled tree.  Such a $\mathsf{t}$ has a lone unary vertex, and we define $\chi(u)$ to be the sum of all of the left indices of the binary labels for which $u$ is on the greater/second branch.  In otherwords, if $\ast_{m_1,m_2}$ labels a binary vertex below $u$ it contributes $m_1$ to the sum $\chi(u)$ if $u$ is on the second (or $m_2$) branch and it contributes $0$ if $u$ is on the first branch.

We then define the shadow by
$$\text{sh}(\sigma, \mathsf{t}) = (\sigma, (u_1+\chi(u), u_2+\chi(u))).$$

Similarly to the previous case (but easier) one may check that $\text{sh}((\sigma, \mathsf{t}))=\text{sh}((\sigma, \mathsf{t}_e))$ by direct inspection.  Finally, observe that the elements of $\mathsf{S}\circ\mathsf{M}^{(1,2)}$ map to elements with different shadows, finishing the proof. \end{proof}

\section{Odd SMOs and $BV_\infty$ algebras.}

In this section we briefly tie Theorem $\ref{main}$ into a previously studied relationship between SMOs and BV algebra.  The crucial notion is that of an odd SMO (which we called a non-connected $\mathfrak{K}$-modular operad in \cite{KWZ}).  We can define an odd SMO in the present context in terms of a rewriting rule involving $\op{S}^!$ in place of $\op{S}$.  For this purpose, we begin by considering the quadratic operad $\op{S}^!$.

The Feynman category associated to $\op{S}$ is cubical (\cite{KW},\cite{KFC}) and so its quadratic dual is particularly easy to describe.  First, write $\op{S} = F(\op{E_S})/\langle \op{R_S} \rangle $.  The quadratic dual of $\op{S}^!$ is defined to be $$\op{S}^! = F_{odd} (\op{E_S}^\ast) / \langle \op{R_S}^\perp \rangle,$$ where $F_{odd}$ is the free $\mathbb{V}$-colored odd operad, given explicitly by the formula $F_{odd}=\Sigma^{-1}F(\Sigma (-))$.  In particular the free odd operad differs from the free operad in that edges have degree $1$.  In this example, one could bypass the need to invoke odd structures by observing that $\Sigma\op{S}^!$ is again just a $\mathbb{V}$-colored operad.

The $\mathbb{V}$-colored sequence of generators $\op{E_S}$ comes with a basis, namely $\sigma \xi_{i,j}$, after Lemma $\ref{ijlem}$, and so we can identify $\op{E_S}$ with $\op{E_S}^\ast$.  We then use Proposition $\ref{basis}$ to characterize the subspaces $\op{R}_S\subset F(\op{E_S})^{(2)}$.  This result tells us that when $\op{R}_S(\vec{v})$ is not zero, it is a subspace of half the dimension, spanned by vectors of the form $x-\iota(x)$ (for $x$ a basis element as in statement (1) of Proposition $\ref{basis}$).  The orthogonal complement is thus spanned by $x+\iota(x)$, modulo the identification $\op{E_S}\cong \op{E_S}^\ast$.

We therefore see:

\begin{lemma}  The quadratic operad $\op{S}^!$ has generators $\op{E_S}^!\cong \Sigma \op{E_S}$, and we write $\bar{\xi}_{i,j}$ for the image of $\xi_{i,j}$ via this isomorphism.  The relations of $\op{S}^!$ are generated by all those of the form $$\bar\xi_{i,j}\bar\xi_{k^\prime,l^\prime} = -\bar\xi_{k,l}\bar\xi_{i^\prime,j^\prime}.$$
provided $\iota(i,j,k^\prime,l^\prime) =(k,l,i^\prime, j^\prime)$ (as defined in Equation $\ref{iota}$).
\end{lemma}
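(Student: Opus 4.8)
The plan is to compute $\op{S}^!$ directly from the definition $\op{S}^! = F_{odd}(\op{E_S}^\ast)/\langle \op{R_S}^\perp\rangle$, using the explicit basis for $\op{R_S}$ provided by statement (1) of Proposition $\ref{basis}$. The generators claim is essentially definitional: since $\op{E_S}$ has the basis $\{\sigma\xi_{i,j}\}$ by Lemma $\ref{ijlem}$, its linear dual $\op{E_S}^\ast$ has the dual basis, and applying the degree shift $\Sigma$ inherent in $F_{odd}$ yields generators $\bar\xi_{i,j} \cong \Sigma\xi_{i,j}$. The substance of the lemma is the computation of the orthogonal complement $\op{R_S}^\perp$ inside $F_{odd}(\op{E_S}^\ast)^{(2)}$.

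First I would fix a type $\vec{v}=(n;n-4)$ and recall from the proof of statement (1) of Proposition $\ref{basis}$ that, after applying Lemma $\ref{purelem}$ to reduce to the space $\op{T}_{\ast,\xi}(n,n-4)$ of pure trees, the weight-2 space $F(\op{E_S})^{(2)}(\vec{v})$ has a basis indexed by the ordered pairs $\mathsf{O}_n$ (each two-vertex tree $\xi_{k',l'}\cdot\xi_{i,j}$ corresponds to a point of $\mathsf{O}_n$), while $\op{R_S}(\vec{v})$ is spanned by the difference vectors $\xi_{k',l'}\cdot\xi_{i,j} - \xi_{i',j'}\cdot\xi_{k,l} = x - \iota(x)$, where the two summands are interchanged by the involution $\iota$ of subsection $\ref{2sec}$. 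Thus the relations decompose the basis into $\iota$-orbits of size two, with $\op{R_S}$ spanned by the antidiagonals $x-\iota(x)$ within each orbit and having exactly half the total dimension. The orthogonal complement, computed against the dual basis, is then spanned by the symmetric vectors $x^\ast + \iota(x)^\ast$ — one for each $\iota$-orbit — so that $\dim\op{R_S}^\perp = \dim\op{R_S}$, consistent with the dimension count already recorded ($n!/8$ in each).

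The only genuinely delicate point is the appearance of the minus sign in the stated relation $\bar\xi_{i,j}\bar\xi_{k',l'} = -\bar\xi_{k,l}\bar\xi_{i',j'}$, which I would attribute to the odd structure $F_{odd}=\Sigma^{-1}F(\Sigma(-))$. Whereas in the ordinary free operad $F(\op{E_S})$ the relation is the difference $x-\iota(x)$ (symmetric under $\iota$ up to sign $+$), passing to $F_{odd}$ assigns degree $1$ to each edge, so transposing the two self-gluings across their shared edge introduces a Koszul sign. Concretely, the pairing that defines $\perp$ must be taken with the sign conventions of the odd free operad, and tracing through how $\iota$ acts on the odd generators shows that the functional annihilating $x-\iota(x)$ is represented, under the identification $\op{E_S}\cong\op{E_S}^\ast$ and in the odd operad, by $\bar x + \iota(\bar x)$ in ordinary signs but by $\bar x - (\text{transposed})$ once the edge-degree sign is absorbed. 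I expect this sign-bookkeeping to be the main obstacle: the statement is not about dimensions (those are forced) but about pinning down the precise odd form of the dual relation, and the cleanest route is to verify that $\bar\xi_{i,j}\bar\xi_{k',l'} + \bar\xi_{k,l}\bar\xi_{i',j'}$ pairs to zero against $\xi_{k',l'}\xi_{i,j}-\xi_{i',j'}\xi_{k,l}$ in the even theory, and that the odd shift converts the $+$ into the $-$ displayed in the lemma. Finally I would remark, as the excerpt already notes, that one may sidestep the odd formalism entirely by observing $\Sigma\op{S}^!$ is an ordinary $\mathbb{V}$-colored operad, which provides an independent check of the sign.
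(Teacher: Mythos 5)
Your main line coincides with the paper's: identify $\op{E_S}\cong\op{E_S}^\ast$ via the basis $\{\sigma\xi_{i,j}\}$ of Lemma~\ref{ijlem}, use statement (1) of Proposition~\ref{basis} (via Lemma~\ref{purelem}) to see that $\op{R_S}(\vec{v})$ is the span of the antidiagonal vectors $x-\iota(x)$, and compute the annihilator as the span of the diagonal vectors $x^\ast+\iota(x)^\ast$. That computation is correct, and it is already the entire content of the lemma.

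Where you go astray is precisely the point you flag as ``the main obstacle'': the minus sign. There is no Koszul-sign issue here, and no conversion of $+$ into $-$ ever takes place. The spanning element $\bar\xi_{i,j}\bar\xi_{k^\prime,l^\prime}+\bar\xi_{k,l}\bar\xi_{i^\prime,j^\prime}$ of $\op{R_S}^\perp$ and the displayed relation $\bar\xi_{i,j}\bar\xi_{k^\prime,l^\prime}=-\bar\xi_{k,l}\bar\xi_{i^\prime,j^\prime}$ are literally the same statement: a relation is a spanning element of the ideal set equal to zero in the quotient. If you actually performed the extra flip you describe (``once the edge-degree sign is absorbed''), you would arrive at the spanning element $\bar{x}-\iota(\bar{x})$, whose vanishing gives $\bar{x}=\iota(\bar{x})$ --- the \emph{even} relation, i.e.\ you would have computed $\Sigma\op{S}$ rather than $\op{S}^!$, contradicting the lemma. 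The reason the odd structure contributes nothing at this step is that the generators $\xi_{i,j}$ are concentrated in degree $0$ and the two trees $x$ and $\iota(x)$ appearing in any single relation have the same shape (a linear tree with two unary vertices); hence whatever suspension and ordering conventions are used to pair $F_{odd}(\Sigma\op{E_S}^\ast)^{(2)}(\vec{v})$ against $F(\op{E_S})^{(2)}(\vec{v})$ rescale the pairing on both basis elements by the same overall sign, and the \emph{relative} sign --- which is all the orthogonality condition sees --- is unchanged. So your ``cleanest route'' verification (pairing $\bar{x}+\iota(\bar{x})$ against $x-\iota(x)$) is complete exactly as stated; the subsequent ``odd shift converts the $+$ into the $-$'' step should simply be deleted, since carrying it out would make the argument wrong rather than finish it.
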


Note that $\op{E}_{\op{S}^!}$ may be regarded as the span of the set of $\sigma\bar\xi_{ij}$, along with a shift in degree.  Hence we may, analogous to Equation $\ref{rewrite}$ above, define a rewriting rule
$$
\op{E_M}(n-2,m;n+m-2)\circ_1 \op{E}_{\op{S}^!}(n,n-2)
\stackrel{\bar\lambda_{n,m}}\longrightarrow \op{E}_{\op{S}^!}(n+m;n+m-2)\circ_1 \op{E_M}(n,m;n+m)
$$
by defining $\bar\lambda(\ast_{n-2,m}\circ_1\bar{\xi}_{i,j}) = \bar{\xi}_{i,j}\circ_1\ast_{n,m}$ and extending linearly and $S_{n+m-2}$ equivariantly.   The following is an immediate consequence of Theorem $\ref{main}$.
\begin{corollary}  The rewriting rule $\bar{\lambda}$ satisfies the Diamond lemma.  In particular the operad $\op{S}^!\vee_{\bar{\lambda}} \op{M}$ is Koszul and  $\op{S}^!\circ\op{M}\cong \op{S}^!\vee_{\bar{\lambda}} \op{M}$.
\end{corollary}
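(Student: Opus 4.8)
The plan is to apply the Diamond Lemma (Lemma \ref{diamondlem}) to the pair of operads $\op{S}^!$ and $\op{M}$ equipped with the rewriting rule $\bar\lambda$, in exact parallel to the way Theorem \ref{main} treated $\op{S}$, $\op{M}$ and $\lambda$. This requires two inputs: that $\op{S}^!$ and $\op{M}$ are Koszul, and that the natural surjection $p\colon \op{S}^!\circ\op{M}(\vec{v}) \to \op{S}^!\vee_{\bar\lambda}\op{M}(\vec{v})$ is injective in total weight $3$ for every $\vec{v}$.

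For the first input, $\op{M}$ is Koszul by the earlier lemma, and $\op{S}^!$ is Koszul because it is the quadratic dual of the Koszul operad $\op{S}$; the fact that the quadratic dual of a Koszul operad is again Koszul is a standard property of Koszul duality which persists in the groupoid colored setting, as recorded in Lemma \ref{dllem}. So both constituents of the distributive law are Koszul.

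The heart of the matter is the weight $3$ injectivity, and here I would argue that the combinatorial analysis in the proof of Theorem \ref{main} applies without essential change. The generators of $\op{S}^!$ are in canonical bijection with those of $\op{S}$ via $\bar\xi_{i,j}\leftrightarrow \xi_{i,j}$ (up to the degree shift), the rewriting rule $\bar\lambda$ has exactly the same combinatorial form as $\lambda$ --- relocating a self-gluing past a merger with the identical reindexing --- and, by the description of $\op{S}^!$ above, the internal relations of $\op{S}^!$ relate precisely the same $\iota$-pairs of pure trees as those of $\op{S}$, differing only by the replacement of $x-\iota(x)$ with $x+\iota(x)$. Consequently the shadow maps of Theorem \ref{main}, valued in $S_{n_0}\times\mathsf{U}_{n_1+n_2}$ and $S_{n_0}\times\mathsf{P}_{n_0+2}$ in the two nontrivial bi-indices, remain well defined and remain invariant under the moves $(\sigma,\mathsf{t})\sim(\sigma,\mathsf{t}_e)$, since that invariance rested only on the identity $\phi\circ\iota=\phi$ from Equation \ref{phidef} together with Lemma \ref{plusn}, neither of which sees the sign. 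Since distinct standard-form basis elements of $\op{S}^!\circ\op{M}^{(3)}$ still carry distinct shadows, $p$ is injective in weight $3$, and Lemma \ref{diamondlem} then yields both the Koszulity of $\op{S}^!\vee_{\bar\lambda}\op{M}$ and the isomorphism $\op{S}^!\circ\op{M}\cong\op{S}^!\vee_{\bar\lambda}\op{M}$.

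I expect the only point one must verify rather than merely transcribe to be that the signs introduced by the $\op{S}^!$ relations do not obstruct confluence of the critical pairs. This is controlled by the observation that $\bar\lambda$ is itself sign-free, so the sign of any weight $3$ normal form is produced solely by reorderings of self-gluings among themselves; the consistency of those signs is exactly the confluence already guaranteed by the Koszulity of $\op{S}^!$, and it is decoupled from the merger moves whose combinatorics is settled by Theorem \ref{main}. Once this decoupling is noted, the weight $3$ injectivity follows from Theorem \ref{main} with signs inserted mechanically, and the corollary is immediate.
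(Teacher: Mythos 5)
Your route is genuinely different from the paper's, and it has a gap at exactly the point you flag at the end. The paper never revisits the shadow combinatorics at all: it observes that $(\op{S}^!\circ\op{M})^{(3)}(\vec{v})$ and $(\op{S}\circ\op{M})^{(3)}(\vec{v})$ have the same dimension, that $(\op{S}^!\vee_{\bar\lambda}\op{M})^{(3)}(\vec{v})$ and $(\op{S}\vee_{\lambda}\op{M})^{(3)}(\vec{v})$ have the same dimension, and that Theorem \ref{main} makes the two unsigned spaces equidimensional; hence all four spaces have the same finite dimension, and the canonical surjection $p$ in weight $3$ is a surjection between spaces of equal dimension, so it is injective. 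That dimension count sidesteps every sign question, which is its entire point.

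The gap in your version is this: in the signed setting there is no underlying set operad, so ``distinct normal forms have distinct shadows'' no longer suffices. Injectivity of the linear map $p$ requires, in addition, that no normal form is annihilated in the quotient, i.e.\ that no cycle of rewritings forces $x=-x$. You acknowledge this, but your justification --- that the sign consistency ``is decoupled from the merger moves'' and is ``exactly the confluence already guaranteed by the Koszulity of $\op{S}^!$'' --- is not correct. The critical pair that must be checked couples the two kinds of moves: take the tree with a merger at the root and one odd self-gluing on each of its two branches. Pushing the branch-one gluing past the merger first and then the branch-two gluing produces one normal form; the opposite order produces the $\iota$-related normal form, and in $\op{S}^!$ these two differ by a sign, so a naive reading gives $x=-x$. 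Consistency holds only because the two starting expressions themselves already differ by a Koszul sign in the odd free operad,
$$(\ast\circ_1\bar\xi)\circ_2\bar\xi' = -\,(\ast\circ_2\bar\xi')\circ_1\bar\xi,$$
since reordering two odd labels sitting on parallel branches costs $(-1)$, and this sign cancels the sign in the $\op{S}^!$ relation. This cancellation is a property of how $\bar\lambda$ interacts with the sign conventions of $F_{odd}$; it is invisible inside $\op{S}^!$ alone, where self-gluings are unary and therefore never sit in parallel, so it cannot be ``already guaranteed'' by the Koszulity of $\op{S}^!$. Your argument becomes a correct (and more explicit) alternative to the paper's once this finite verification is actually carried out --- and it does work out --- but as written, the one step at which the statement could fail is asserted rather than proved. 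A minor additional point: Koszulity of $\op{S}^!$ follows from Koszulity of $\op{S}$ by the standard self-duality of the Koszul property, not from Lemma \ref{dllem}, which concerns distributive laws.
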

\begin{proof}  Fix $\vec{v}$.  The vector spaces $(\op{S}\circ\op{M})^{(3)}(\vec{v})$ and $(\op{S}^!\circ\op{M})^{(3)}(\vec{v})$ have the same dimension.  Similarly $(\op{S}^!\vee_{\bar{\lambda}}\op{M})^{(3)}(\vec{v})$ and $(\op{S}\vee_{\lambda}\op{M})^{(3)}(\vec{v})$ have the same dimension.  Theorem $\ref{main}$ implies that $(\op{S}\circ\op{M})^{(3)}(\vec{v})$ and $(\op{S}\vee\op{M})^{(3)}(\vec{v})$ have the same dimension.  Therefore, all four have the same dimension.  Hence any surjection between two such spaces is also an injection, which is the condition of the Diamond lemma.
\end{proof}
In analogy with the above, we define $\op{S}^!\op{M}:=\op{S}^!\vee_{\bar{\lambda}} \op{M}$.
\begin{definition}  An odd Schwarz modular operad (odd SMO for short) is an algebra over $\op{S}^!\op{M}$.
\end{definition}
In particular, odd SMOs are encoded by a Koszul operad. As an application of this fact, let us recall the relationship between odd SMOs and BV algebras.  The Feynman category perspective views an odd SMO as a functor, hence we can take its limit or colimit.  Specifically, if $A$ is an odd SMO, these give us
$$
colim(A)\cong \bigoplus_{n}A(n)_{S_n} \text{ \ \ \ and \ \ \ } lim(A)\cong \prod_{n}A(n)^{S_n}.
$$
We view $colim(A)\subset lim(A)$ after identifying coinvariants with invariants.  With this terminology we can translate the following result from \cite{KWZ}.

\begin{theorem}\label{KWZ}\cite[Theorem 7.5]{KWZ} The limit of an odd SMO carries the structure of a BV algebra, for which the colimit is a BV subalgebra, generalizing the classical odd Lie bracket on the colimit of an operad.
\end{theorem}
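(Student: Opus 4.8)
The plan is to exhibit the claimed BV structure directly from the odd SMO operations and then recognize it as the structure of \cite[Theorem 7.5]{KWZ}. First I would record the dictionary. An algebra over $\op{S}^!\op{M}$ is precisely a symmetric sequence $A$ equipped with mergers $\ast_{n,m}$ and \emph{odd} self-gluings $\bar{\xi}_{i,j}$, subject to the relations of $\op{M}$, of $\op{S}^!$, and of the rewriting rule $\bar{\lambda}$. This is exactly the data of a non-connected $\mathfrak{K}$-modular operad in the sense of \cite{KWZ}, the odd self-gluings playing the role of the $\mathfrak{K}$-twisted contractions. With this identification the theorem becomes a restatement of \cite[Theorem 7.5]{KWZ}, and the remaining work is to explain how the BV structure is assembled on $lim(A)=\prod_n A(n)^{S_n}$.

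Second I would build the commutative product. Summing the mergers over all shuffles of the inputs yields a binary operation $\mu$ on the totalization; its associativity is axiom (R2), its (graded) commutativity is (S4) together with the equivariance (S3), and it is precisely the symmetrization over the $S_n$ that makes the invariants $\prod_n A(n)^{S_n}$ (and dually the coinvariants $\bigoplus_n A(n)_{S_n}$) the natural domain. This endows both $lim(A)$ and $colim(A)$ with a commutative associative product for which the inclusion $colim(A)\subset lim(A)$ is a morphism.

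Third I would define the BV operator as the total self-gluing $\Delta:=\sum_{i<j}\bar{\xi}_{i,j}$, symmetrized to act on invariants and lowering $n$ by two. The identity $\Delta^2=0$ is exactly the odd form of (R1): the preceding relation $\bar{\xi}_{i,j}\bar{\xi}_{k^\prime,l^\prime}=-\bar{\xi}_{k,l}\bar{\xi}_{i^\prime,j^\prime}$ says that performing a disjoint pair of self-gluings in the two possible orders produces terms that differ by a sign, so all contributions to $\Delta^2$ cancel in pairs. The odd (rather than even) character of the self-gluings is what converts the commutativity of (R1) into this cancellation, which is the whole reason $\op{S}^!$ appears in place of $\op{S}$.

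Finally, and this is the crux, I would verify the second-order (seven-term) BV identity relating $\Delta$ and $\mu$, equivalently that the induced bracket $[a,b]:=\Delta\mu(a,b)-\mu(\Delta a,b)\mp\mu(a,\Delta b)$ is a biderivation of the correct degree. The mechanism is axiom (R3): a self-gluing applied after a merger either contracts two legs lying within a single factor, reproducing $\mu(\Delta a,b)$ and $\mu(a,\Delta b)$, or contracts one leg from each factor, and it is these cross-terms that assemble the bracket. The main obstacle I anticipate is the bookkeeping of these two classes of self-gluing across the symmetrization while keeping the odd signs consistent, so that the cross-terms define a genuine graded Lie bracket; once this is established the BV axioms follow, and restricting to $colim(A)$ recovers the classical odd operadic bracket, giving the final clause.
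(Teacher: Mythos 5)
Your proposal is mathematically sound, but it takes a genuinely different route from the paper: the paper does not prove this statement at all, it \emph{cites} it. Theorem \ref{KWZ} is imported wholesale from \cite[Theorem 7.5]{KWZ}, where the BV structure was constructed on the colimit; the only new content the paper adds is the one-sentence observation that the formulas (BV operator $=$ sum of odd self-gluings, product induced by mergers) lift from the colimit to the limit because, for any fixed factor $A(N)^{S_N}$ of the product, only finitely many indices can contribute a merger or self-gluing landing in that factor. Your proposal instead re-derives the cited theorem from the axioms: the dictionary between $\op{S}^!\op{M}$-algebras and non-connected $\mathfrak{K}$-modular operads, commutativity/associativity of $\mu$ from (S3), (S4), (R2), the cancellation $\Delta^2=0$ from the odd relation $\bar\xi_{i,j}\bar\xi_{k^\prime,l^\prime}=-\bar\xi_{k,l}\bar\xi_{i^\prime,j^\prime}$ paired under the involution $\iota$, and the seven-term identity from the splitting of self-gluings after a merger into within-factor terms and cross-terms via (R3). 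All of this is correct and is essentially the content of the proof in \cite{KWZ}; what your route buys is self-containedness, at the cost of leaving the hardest verification (that the cross-terms form a graded Lie bracket and satisfy the BV compatibility) as acknowledged bookkeeping, which the paper avoids entirely by citation. Conversely, the one point that is the paper's actual contribution here --- the finiteness argument justifying the infinite sums on $lim(A)=\prod_n A(n)^{S_n}$ --- is only implicit in your write-up: you should state explicitly that each arity-$N$ component of $\mu(a,b)$ and of $\Delta(a)$ is a finite sum, since that is precisely what makes the limit (and not just the colimit) a BV algebra.
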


The BV operad is the sum of self-gluings and the commutative product is induced by mergers.  Strictly speaking in \cite{KWZ} we focused on the colimit, but the formulas immediately lift to the limit because given a factor of the product, there are only finitely many indices for which a merger or self-gluing could land in said factor.

We combine this with our Koszulity result above to establish the following homotopy invariant analog.  We define an $\infty$-odd-SMO in the expected way, namely as an algebra over the source of the resolution: 
\begin{equation}\label{res}
	D((\op{S}^! \op{M})^!)\stackrel{\sim}\to \op{S}^! \op{M},
\end{equation}
where $D$ denotes the linear dual of the groupoid colored operadic bar construction.  Then we conclude that
\begin{corollary}  An $\infty$-odd-SMO, carries a $BV_\infty$-structure on its colimit and its limit, lifting the structure on homology given in Theorem $\ref{KWZ}$.
\end{corollary}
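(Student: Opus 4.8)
The plan is to deduce the corollary from the Koszulity of $\op{S}^!\op{M}$ (established in the previous corollary) together with the functoriality of totalization, by showing that the totalization functor intertwines the Koszul resolution (\ref{res}) of $\op{S}^!\op{M}$ with the minimal model $BV_\infty\to BV$ of the $BV$ operad. The starting observation is that Theorem \ref{KWZ} is exactly the \emph{strict} shadow of what we want: on the level of honest (non-resolved) algebras, totalization carries an odd SMO to a $BV$-algebra, with the BV operator produced by the sum of self-gluings and the commutative product produced by mergers. I would first repackage this as a morphism of operads realizing the $BV$-operations inside the totalization of $\op{S}^!\op{M}$; here one uses that totalization is lax monoidal, sending the groupoid-colored $End_X$ to the ordinary $End_{lim\,X}$ (resp.\ $End_{colim\,X}$), so that the groupoid-colored structure $\op{S}^!\op{M}\to End_X$ totalizes to an ordinary $BV$-structure on $lim\,X$, the $BV$-relations being the image under totalization of the quadratic relations (R1)--(R3) in their odd, $\op{S}^!$-twisted form.

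Second, I would upgrade this quadratic comparison to the Koszul dual (cooperad) level. Both $\op{S}^!\op{M}$ and $BV$ are Koszul quadratic operads---the former by the previous corollary, the latter classically---so their resolutions are the respective cobar constructions on the Koszul dual cooperads, and $D((\op{S}^!\op{M})^!)$ in (\ref{res}) is the minimal model. Matching generators (self-gluing $\mapsto\Delta$, merger $\mapsto$ product) and quadratic relations as above produces a compatible comparison of Koszul dual cooperads; applying the cobar construction then yields a comparison of resolutions. Concretely, an $\infty$-odd-SMO is a twisting map out of $(\op{S}^!\op{M})^!$ valued in $End_X$, and composing with totalization and the cooperad comparison gives a twisting map out of $BV^!$ valued in $End_{lim\,X}$ (resp.\ $End_{colim\,X}$), which is precisely a $BV_\infty$-structure. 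Because the self-gluings carry the shifted degree coming from $\op{S}^!$, the degrees and signs of the resulting operations are exactly those of the $BV_\infty$ operad.

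For the homology statement, I would use that totalization is built from (co)limits over the finite automorphism groups $S_n$ in characteristic $0$, hence is exact and commutes with passage to homology. Thus the homology of the $BV_\infty$-algebra on the totalization agrees with the totalization of the homology, and the quasi-isomorphism $BV_\infty\to BV$ identifies the induced structure on homology with the strict $BV$-algebra of Theorem \ref{KWZ}. One must also re-examine the finiteness noted in the remark following Theorem \ref{KWZ}---that only finitely many indices contribute a merger or self-gluing into any fixed factor---and check that it persists for the higher operations, so that the infinite products and sums defining the $BV_\infty$ operations on $lim$ and $colim$ converge.

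The hard part will be the second step: making precise and verifying the compatibility of totalization with the two cobar resolutions. Totalization is not a morphism of operads in the naive sense, since it collapses the groupoid coloring and changes arities, so one must carefully track how a decorated tree built from self-gluings and mergers (a higher operation of the $\infty$-odd-SMO) assembles into the corresponding $BV_\infty$ operation, and confirm that the cobar differential (the sum over edges) is carried to the $BV_\infty$ differential. Controlling the signs introduced by the odd structure on $\op{S}^!$, together with the convergence just mentioned, are the genuinely delicate points; once the comparison of Koszul dual cooperads is in hand, the remainder is formal.
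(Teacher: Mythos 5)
Your proposal takes a genuinely different route from the paper, but it has a gap at precisely the step you yourself flag as ``the hard part,'' and the plan for closing that gap rests on a false premise. You assert that $BV$ is ``classically'' a Koszul quadratic operad, so that $BV_\infty$ is the cobar construction on a Koszul dual cooperad obtained by dualizing generators and homogeneous quadratic relations. This is not so: in the presentation by a product and $\Delta$ the seven-term relation has weight $3$, and in the presentation including the bracket the relation $[a,b]=\Delta(ab)-\Delta(a)b\mp a\Delta(b)$ is quadratic-\emph{linear}; $BV$ is only an inhomogeneous Koszul operad (Galvez-Carrillo--Tonks--Vallette), so $BV^!$ is a dg cooperad and $BV_\infty$ is the cobar of that. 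More seriously, your proposed matching of generators (self-gluing $\mapsto\Delta$, merger $\mapsto$ product) cannot be promoted to the required comparison even after this repair: on the totalization $\Delta$ is the infinite sum $\sum_{i<j}\xi_{i,j}$, and the Gerstenhaber bracket --- a generator on the $BV$ side --- has no counterpart among the generators of $\op{S}^!\op{M}$ at all; it only materializes after totalization as a sum of weight-$2$ composites (mergers followed by mixed self-gluings). So the cooperad comparison whose existence you defer to the end is exactly where all the content lies, it is not a weight-preserving ``matching,'' and it is unclear it exists in the form you describe; the cooperad-versus-infinite-product tension on $lim$, which you note but do not settle, compounds this.

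The paper avoids all of this, and its proof is essentially your first and third paragraphs plus one further observation. Its three steps are: (i) since the automorphism groups are finite and the characteristic is $0$, invariants and coinvariants agree and commute with homology, giving $H_\ast((co)lim(A))\cong(co)lim(H_\ast(A))$; (ii) by the quasi-isomorphism of Equation $\ref{res}$, the homology $H_\ast(A)$ is a \emph{strict} odd SMO, so Theorem $\ref{KWZ}$ equips $(co)lim(H_\ast(A))$ with a strict BV structure; (iii) the (groupoid colored) homotopy transfer theorem then lifts this BV structure on homology to a $BV_\infty$-structure on $(co)lim(A)$ itself, which by construction induces the given structure on homology. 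In other words, the paper never compares resolutions at the chain level: it applies the strict theorem on homology and transfers back, so your entire second step is unnecessary for the statement at hand. What your approach would buy, if completed, is a stronger and more explicit result --- a $BV_\infty$-structure canonically assembled from the $\infty$-odd-SMO operations rather than produced by abstract transfer --- but that is a substantial project, not a corollary-length argument, and as written your proposal does not contain a proof.
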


\begin{proof} 
	
Let $A$ be an $\infty$-odd-SMO.
	In particular, $A$ is a $\mathbb{V}$-module valued in chain complexes,  over a groupoid $\mathbb{V}$ such that for each $v\in\mathbb{V}$ the group $Aut(v)$ is finite.  Hence, the right exact functor of coinvariants coincides with the left exact functor of invariants to establish that, in this case, $H_\ast(A(n))_{S_n} \cong H_\ast(A(n)_{S_n})$ for all $n$.  Taking their direct product (resp.\ sum) over all $n$ we find  $H_\ast((co)lim(A))\cong (co)lim(H_\ast(A))$.
	
	Invoking the quasi-isomorphism in Equation $\ref{res}$, the $\mathbb{V}$-module $H_\ast(A)$ carries the structure of an odd SMO, and hence the structure of a BV algebra on its limit by Theorem $\ref{KWZ}$.  Combining this with the above paragraph, we have a BV structure on $H_\ast((co)lim(A))$, and the result follows from the groupoid colored  homotopy transfer theorem (after eg \cite{WardMP}). \end{proof}

\end{document}